\newcommand{\gl}{{\mathfrak g \mathfrak l}}
\newcommand{\cx}{{\mathbb C}}
\newcommand{\tr}{\operatorname{tr}}
\newcommand{\Lie}{\operatorname{Lie}}
\newcommand{\Hom}{\operatorname{Hom}}
\newcommand{\supp}{\operatorname{supp}}
\newcommand{\Mat}{\operatorname{Mat}}
\newcommand{\Rep}{\operatorname{Rep}}
\newcommand{\wh}{\widehat}
\newcommand{\ol}{\overline}
\newcommand{\ul}{\underline}
\numberwithin{equation}{section}
\newtheorem{theorem}{Theorem}[section]
\newtheorem{lemma}[theorem]{Lemma}
\newtheorem{corollary}[theorem]{Corollary}
\newtheorem{proposition}[theorem]{Proposition}
\theoremstyle{remark}
\newtheorem{remark}[theorem]{Remark}
\newtheorem{definition}[theorem]{Definition}
\newtheorem{example}[theorem]{Example}
\newcommand{\oP}{{\mathbb{P}}}
\newcommand{\oZ}{{\mathbb{Z}}}
\newcommand{\sF}{{\mathcal{F}}}
\newcommand{\sO}{{\mathcal{O}}}
\newcommand{\sT}{{\mathcal{T}}}
\newcommand{\sV}{{\mathcal{V}}}
\begin{document}

\title{Quivers and Poisson structures}
\author{Roger Bielawski }
\address{School of Mathematics\\
University of Leeds\\Leeds LS2 9JT\\ UK}



\subjclass[2000]{53D17, 16G20, 17B63}
\begin{abstract} We produce natural quadratic Poisson structures on moduli spaces of representations of quivers. In particular, we study a natural Poisson structure for the generalised Kronecker quiver with $3$ arrows.
 \end{abstract}

\maketitle

\thispagestyle{empty}

\section{Introduction}

This note is partly an original research article and partly an exposition and interpretation, from the  viewpoint of a   ``commutative geometer",  of  constructions and some of the results of Van den Bergh \cite{vB} and  Pichereau and Van de Weyer \cite{PW} on Poisson structures on moduli spaces of representations of quivers. 
\par
Our starting point is the observation that, given a quiver $Q$, it is easy to produce natural tangent bivectors on its representation spaces. Recall that a representation of $Q$ in $\ul V=(V_1,\dots,V_k)$ consists of a family of matrices $X_a$, for each arrow $a$. The vector space of all such representations is denoted by $\Rep(Q,\ul V)$. A cotangent vector to $\Rep(Q,\ul V)$ can be identified with a collection of transposed matrices $C_a$, $a\in Q$. A natural quadratic bivector field $\Pi\in \Lambda^2 T\Rep(Q,\ul V)$ can be defined by
\begin{equation} \Pi\bigl((C_a),(D_a)\bigr)|_{(X_a)}=\sum R_{abcd}\tr\left( X_aC_bX_cD_d-X_aD_bX_cC_d\right),\label{one}\end{equation}
where $R_{abcd}$ are scalars and the sum is over all arrows $a,b,c,d$ such that $ab^\ast cd^\ast$ is a closed path (here the asterisk denotes the reverse arrow). This bivector field is clearly invariant under $\prod_{i=1}^k GL(V_i)$ and, so, it descends to each GIT quotient of $\Rep(Q,\ul V)$.
\par
The condition which the $R_{abcd}$ must satisfy, in order for $\Pi$ to be a Poisson structure, turns out to be the {\em associative Yang-Baxter equation} \cite{Aguiar} 
$$ r_{12}r_{23}+r_{31}r_{12}+r_{23}r_{31}=0,$$
for an appropriate associative algebra depending on $Q$ (see \S\ref{quad}).
\par
While this statement can be proved directly, it is more illuminating to approach the problem  via noncommutative geometry.
There are currently two approaches to noncommutative Poisson structures for any algebra $A$. One, due to Crawley-Boevey \cite{CB} produces a Lie algebra structure on $A/[A,A]$, and it induces Poisson structures on moduli spaces of semisimple representations. The other approach, based on double brackets and double derivations, is due to Van den Bergh \cite{vB} and it induces invariant Poisson structures on any representation space. In the case of quivers, the graded Lie algebra of double derivations has been reinterpreted by Pichereau and Van de Weyer \cite{PW} in terms of the graded necklace Lie algebra, introduced earlier by Lazaroiu \cite{L}.
\par
Part of the purpose of this work is to explain how natural and inevitable it is, even if one is interested only in commutative differential geometry,  to  arrive at the graded necklace picture (see \S\ref{poly}). In fact, once we have this setup, the proof that \eqref{one} defines a Poisson structure on every $\Rep(Q,\ul V)$ if and only if the $R_{abcd}$ satisfy the associative Yang-Baxter equation is almost trivial - see \S\ref{quad}. 
\par
Throughout the paper, we give many examples of noncommutative and of the corresponding commutative Poisson brackets. The commutative geometry proves to be a useful guide to constructing noncommutative examples. In particular, in \S\ref{contract} we describe two procedures, which turn a Poisson structure on one quiver into a Poisson structure on another quiver.
\par
We are particularly interested in generalised Kronecker quivers, given their close relation to moduli spaces of sheaves \cite{ACK}. In section \ref{leaves}, we study a quadratic Poisson structure for  the generalised Kronecker quiver with $3$ arrows, obtained from a solution of the associative Yang-Baxter equation, found by Aguiar \cite{Aguiar}. We show that the induced Poisson structure on the moduli spaces of representations is generically nondegenerate and we describe the corresponding symplectic form. This allows to find generic symplectic leaves of this Poisson structure.

\section{Quivers and moduli spaces of their representations\label{quivers}}

This section serves to recall basic facts about quivers and to set the notation.
\par
A quiver is an oriented graph $Q$. As usual, we also use $Q$ to denote the set of arrows in a quiver. We write $I=\{1,\dots,k\}$ for the set of vertices, and  $h,t:Q\to I$ for the maps which associate to each arrow its head and tail.
\par
A representation of $Q$ is a collection $\ul W$ of vector spaces $W_i$, $i\in I$, and linear maps $\phi_a:W_{t(a)}\to W_{h(a)}$, $a\in Q$.  The vector space of representations of $Q$ in $\ul W$
is  $\bigoplus_{a\in Q} \Hom(W_{t(a)},W_{h(a)})$. We fix isomorphisms $W_i\simeq \cx^{\alpha_i}$, $i=1,\dots,k$, and speak simply of representations with  dimension vector  $\alpha=(\alpha_1,\dots,\alpha_k)$. The space of such representations is denoted by  $\Rep(Q,\alpha)$. Thus
$$\Rep(Q,\alpha)=\bigoplus_{a\in Q} \Mat_{\alpha_{h(a)}\times \alpha_{ t(a)}}(\cx),
$$
and it is acted upon by the group
\begin{equation} G=\prod_{i=1}^k GL_{\alpha_i}(\cx).\label{G}\end{equation} 

\subsection{The path algebra}
The {\em path algebra} $\cx Q$ of $Q$ is the associative algebra over $\cx$ generated by arrows (including trivial arrows at each vertex) with multiplication given by concatenation of paths (written from right to left). Let $e_i$, $i=1,\dots,k$, denote the trivial path at vertex $i$ and $B=\bigoplus_{i=1}^k \cx e_i$ be the corresponding commutative semisimple algebra. Then $\cx Q$ is a $B$-algebra, i.e. a $\cx$-algebra $A$ equipped with a  $\cx$-algebra homomorphism $\imath:B\to A$.

Representations of a quiver $Q$ can be interpreted  as representations of the path algebra $\cx Q$. 
Let $\alpha=(\alpha_1,\dots,\alpha_k)$ be a dimension vector, and let $|\alpha|=\sum_{i=1}^k\alpha_i$. We view a matrix $Z\in \Mat_{|\alpha|\times |\alpha|}(\cx)$ as a block matrix $Z_{ij}$, $i,j=1,\dots,k$, with $Z_{ij}$ an $\alpha_i\times \alpha_j$-matrix. We consider $B=\bigoplus_{i=1}^k \cx e_i$ as diagonally embedded in $\Mat_{|\alpha|\times |\alpha|}(\cx)$ with $e_i$ the $\alpha_i\times \alpha_i$ identity matrix. This makes $\Mat_{|\alpha|\times |\alpha|}(\cx)$ into a $B$-algebra equipped with a canonical $B$-bimodule trace map
\begin{equation}
 \tr_B:\Mat_{|\alpha|\times |\alpha|}(\cx)\to B,\quad \tr_B Z=\bigl(\tr Z_{11},\dots,\tr Z_{kk}\bigr).\label{trace}
\end{equation}
Then:
$$ \Rep(Q,\alpha)\simeq \Hom_B\bigl(\cx Q, \Mat_{|\alpha|\times |\alpha|}(\cx)\bigr).$$
More precisely, let $X\in \Rep(Q,\alpha)$ be given by a collection of matrices $X_a\in \Mat_{\alpha_{h(a)}\times \alpha_{ t(a)}}(\cx)$, $a\in Q$. We view each $X_a$ as an $|\alpha|\times |\alpha|$ matrix $\tilde{X}_a$, by making the $\alpha_{h(a)}\times \alpha_{ t(a)}$ block of  $\tilde{X}_a$ equal to $X_a$ and remaining blocks $(\tilde{X}_a)_{ij}$ equal to zero. Then the homomorphism $\phi(X)\in \Hom_B\bigl(\cx Q, \Mat_{|\alpha|\times |\alpha|}(\cx)\bigr)$ evaluates a $P\in \cx Q$ on matrices $\tilde X_a$. In other words, if $x_1\dots x_n$, $x_i\in Q$, is a path, then $\phi(X)(P)$ is:
\begin{equation} P(X)=\tilde X_{x_1}\cdots \tilde X_{x_n}.\label{evaluate}
\end{equation}

\subsection{Moduli spaces} The interesting varieties arise by taking appropriate quotients of $\Rep(Q,\alpha)$ by the group $G$. There is the ordinary algebro-geometric quotient:
$$\Rep(Q,\alpha)/\hspace{-1mm}/ G={\rm Spec}\bigl(\cx[\Rep(Q,\alpha)]^G\bigr),$$
the points of which are closed orbits in $\Rep(Q,\alpha)$. More generally, one takes a $G$-invariant open subset of $\Rep(Q,\alpha)$, consisting of points which are semistable in an appropriate sense, and considers orbits which are closed in this subset. In particular, for any  character $\chi:G\to \cx^\ast$, there is the ring $\cx[\Rep(Q,\alpha)]^{G,\chi^n}$ of relative invariants of weight $\chi^n$, consisting of functions  $f\in \cx[\Rep(Q,\alpha)]$ such that $f(gx)=\chi^n(g) f(x)$.
The corresponding GIT-quotient is then \cite{King}:
$$\Rep(Q,\alpha)/\hspace{-1mm}/_\chi G={\rm Proj}\Bigl(\bigoplus_{n\geq 0}\cx[\Rep(Q,\alpha)]^{G,\chi^n}\Bigr).$$
The corresponding semistable subset $\Rep(Q,\alpha)^{\chi\text{-ss}}$ consists of points $x$, for which there exists an $f\in  \cx[\Rep(Q,\alpha)]^{G,\chi^n}$ with $n\geq 1$ and $f(x)\neq 0$.  The variety $\Rep(Q,\alpha)/\hspace{-1mm}/_{\chi} G$ is projective over $\Rep(Q,\alpha)/\hspace{-1mm}/ G$. Its smooth locus corresponds to $\chi$-stable orbits, i.e. those of maximal dimension, closed in $\Rep(Q,\alpha)^{\chi\text{-ss}}$.

\section{Equivariant and invariant polyvector fields\label{poly}}

Let $Q$ be a quiver. The double  $\ol Q$ of $Q$ is obtained by attaching, for each arrow $a$ in $Q$, an arrow $a^\ast$ with the same endpoints, but opposite direction.
We shall view elements of the path algebra $\cx \ol Q$ of the double quiver as {\em noncommutative equivariant contravariant tensor fields}. Let us first explain the analogous commutative concept.
\par
Let $M$ be a complex  manifold and let $R$ be a holomorphic ring  (i.e. $R$ is also a manifold such that the ring operations are morphisms). An {\em $R$-valued contravariant tensor field} (of degree $r$) is a holomorphic map $X:\bigotimes^r T^\ast M\to R$, linear on fibres. We denote by $\sT^r(M;R)$ the vector space of such tensor fields and by $\sT(M;R)$ the direct sum of all $\sT^r(M;R)$. It becomes a graded associative algebra with respect to the tensor product and multiplication on $R$. 
\par
Suppose also that a complex Lie group $G$ acts on $M$, and via ring automorphisms on $R$. We can then consider $G$-equivariant $R$-valued tensor fields, which form a subalgebra $\sT^r(M;R)^G$ of $\sT^r(M;R)$. If, in addition, 
 $R$ is equipped with a $G$-invariant linear map $\tr:R\to \cx$, then for any $X\in \sT^r(M;R)^G$ we obtain a $G$-invariant tensor $\tr X$. 
By restricting such a tensor to $\Lambda^\ast T^\ast M$ or $S^\ast T^\ast M$, we obtain invariant skew-symmetric or symmetric tensors.
\par
Let us also recall that skew-symmetric contravariant tensors on $M$ are called {\em polyvector fields}, and their space (exterior algebra) is denoted by $\sV(M)$, with $\sV^r(M)$ denoting those of degree $r$. Finally, remark that the above construction is valid in other categories of manifolds, e.g. smooth or algebraic.

\medskip

Let now $\Rep(Q,\alpha)$ be a representation space of a quiver $Q$. We view $\Rep(Q,\alpha)$ as an algebraic manifold $M$, and, when we speak of functions, tensor or vector fields on $M$, we always mean {\em polynomial} functions, tensor or vector fields.
\par
We set $R=\Mat_{|\alpha|\times |\alpha|}(\cx)$ (recall that $|\alpha|=\sum_{i=1}^k\alpha_i$). To any element $P$ of $\cx\ol Q$, i.e. a noncommutative polynomial in arrows $a$ and $a^\ast$, $a\in Q$, we shall associate an element $\Phi(P)$ of  $\sT(M;R)^G$, where $G$ is defined in \eqref{G}. A point $m\in M$ is a collection of matrices $X_a\in \Mat_{\alpha_{h(a)}\times \alpha_{ t(a)}}(\cx)$, $a\in Q$. A covector is identified with a collection of transposed matrices, i.e. $T^\ast M\simeq \bigoplus_{a\in Q}\Mat_{\alpha_{t(a)}\times \alpha_{h(a)}}(\cx)$.
\par
Let now $P=\lambda x_1\dots x_n$ be a monomial in $\cx\ol Q$ with $r$ occurrences of dual arrows $a^\ast$, i.e. 
$$ P=\lambda P_1a_1^\ast\cdots P_r a_r^\ast P_{r+1},\quad P_i\in \cx Q.$$
Then we  define $\Phi(P)\in \sT^r(M;R)^G$ at a point $X\in M$ by setting $\Phi(P)(Y^1,\dots,Y^r)$, $Y^i=(Y^i_a)\in T^\ast M$,  to be the matrix
$$ \lambda P_1(X)\tilde Y_{a_1}^1\cdots  P_r(X)\tilde Y_{a_r}^r P_{r+1}(X),$$
where $P_i(X)$ is given by \eqref{evaluate}, and $\tilde Y_{a_i}^i$ is the  $|\alpha|\times |\alpha|$-matrix defined just before that formula.
\par
We can grade $\cx\ol Q$ by the number of  arrows in $\ol Q\backslash Q$, i.e.
\begin{equation} \cx \ol Q=\bigoplus_{r=0}^{\infty}\cx\ol Q^r,
\end{equation}
where $\cx\ol Q^r$ is the linear subspace of $\cx \ol Q$ generated by monomials $x_1\dots x_n$ such that exactly $r$ of the $x_i$ belong to $\ol Q\backslash Q$. The map 
$$ \Phi:\cx\ol Q\to \sT(M;R)^G$$
becomes a homomorphism of graded $B$-algebras. 
\par
The trace map $\tr_B:R\to B$, defined in \eqref{trace} is $G$-invariant, and so we obtain a map
$$ \Psi_B=\tr_B\circ \Phi:\cx \ol Q\to \sT(M;B)^G.$$
Composing this with a further linear map $\mu:B\to \cx$ results in map  taking values in the algebra  $\sT(M)^G$ of $G$-invariant contravariant tensors. If $\mu(e_i)=\lambda_i$, we shall write $\tr_\lambda=\mu\circ \tr_B$, so that $\tr_\lambda:R\to \cx$ is $G$-invariant map given by
\begin{equation} \tr_\lambda(Z)=\sum_{i=1}^k \lambda_i \tr Z_{ii},\label{trace1}\end{equation}
where $Z_{ii}$ is the diagonal $\alpha_i\times \alpha_i$-block of $Z\in R$.

\subsection{The graded necklace Lie algebra}

We have now a family of natural maps $\tr_\lambda\circ \Phi:\cx\ol Q\to \sT(M)^G$ ($M=\Rep(Q,\alpha)$), which evaluates a trace of a noncommutative polynomial in $\cx\ol Q$ on $(Y_a)\in \bigotimes T^\ast_X M$. For each $X\in M$, we can restrict $\tr_\lambda \Phi(X)$ to the space of  skew-symmetric covariant tensors. This results in a map
\begin{equation}\Psi=\Psi_\lambda: \cx\ol Q\to \sV(M)^G,\label{Psi}\end{equation}
to $G$-invariant polyvector fields. Since trace of a product is invariant under cyclic permutations of factors, the value $\Psi(P)(Y_1,\dots,Y_r)$ for $P\in \cx\ol Q^r$ remains unchanged under a cyclic permutation of monomials in $P$ and a simultaneous induced permutation of $(Y_1,\dots,Y_r)$. On the other hand, the $Y_i$ are meant to anticommute. We are led to the following definition:
\begin{definition}
 Let $Q$ be a quiver. We define  $\sV Q$ to be the quotient of $\cx\ol Q$ by the relations
\begin{equation}
 PQ=(-1)^{pq}QP, \quad \text{if $P\in \cx\ol Q^p$, $Q\in \cx\ol Q^q$}.\label{pq}
\end{equation}
\end{definition}
In other words, the dual arrows $a^\ast$ ``cyclically anticommute''. Observe that paths which are not closed become zero in $\sV Q$, and so $\sV Q$ should be viewed as being generated by closed paths (``necklaces").
\par
We note that the map $\Psi$ descends to $\sV Q$, and that $\sV Q$ has the induced grading.
In particular, elements of $\sV^1 Q$ can be uniquely represented in the form
\begin{equation} \sum_{a\in Q} P_a a^\ast,\quad P_a\in \cx Q,\enskip e_{h(a)}P_a=P_a.\label{V1}\end{equation}
We can identify $\sV^1 Q$ with ${\rm Der}_B\;\cx Q$: the derivation corresponding to \eqref{V1} is defined on generators as $a\mapsto P_a$. In particular, $\sV^1 Q$ is canonically a Lie algebra, and the map $\Psi:\sV^1 Q\to (\sV^1 M)^G$ preserves the Lie bracket. 
\begin{example} For $\tr_\lambda:R\to \cx$ given by \eqref{trace1}, the map $\Psi_\lambda$ sends $Pa^\ast\in \sV^1 Q$ to the vector field $V\in \sV^1(M)^G$ given by
$$ V|_X=c\sum_{i,j} P(X)_{ij}\frac{\partial}{\partial a_{ij}},$$
where the $a_{ij}$ are coordinates on $M$ given by the entries of the matrix $X_a$, and $c=c(\lambda,a,P)$ depends linearly on $\lambda_i$, with $i$ in the set of all heads and tails of arrows making up $Pa^\ast$.
\end{example}

\par
We shall now extend the Lie bracket on $\sV^1 Q$ to all of $\sV Q$. 
\par
Let first define, for any $w\in \ol Q$, the ``directional superderivative'' $D_w:\sV Q\to \cx\ol Q$. For any monomial $x_1\dots x_n$, $x_i\in \cx\ol Q$
we set:
\begin{equation}
 D_w (x_1\dots x_n)=\sum_{x_i=w}(-1)^{\lambda_i\mu_i} x_{i+1}\dots x_nx_1\dots x_{i-1}, \label{D}
\end{equation}
modulo relations \eqref{pq}, where $\lambda_i$ (resp. $\mu_i$) is the number of dual arrows $a^\ast$ among $x_{i+1},\dots,x_n$ (resp. among $x_1,\dots,x_i$).  We extend $D_w$ linearly to all of $\sV Q$. 
\par
We now define the {\em Schouten bracket} on $\sV Q$ as follows. For $\gamma\in \sV^r Q$ and $\delta\in \sV^s Q$ we set (cf. \cite{L, PW}):
\begin{equation}
[\gamma,\delta]_{_{\sV}}=\sum_{a\in Q}\left(D_{a^\ast}(\gamma)D_a(\delta)-(-1)^{(r-1)(s-1)} D_{a^\ast}(\delta)D_a(\gamma)\right),
 \label{bracket}
\end{equation}
modulo relations \eqref{pq}.

\begin{proposition} Let $A,B,C\in \cx\ol Q$ represent elements of $\sV^r Q, \sV^s Q$ and $\sV^t Q$, respectively. The Schouten bracket on $\sV Q$
 satisfies:
\begin{itemize}
\item[(i)] $[A,B]_{_{\sV}}=-(-1)^{(r-1)(s-1)}[B,A]_{_{\sV}}$;
\item[(ii)] the graded Jacobi identity: 
$$(-1)^{(r-1)(t-1)}[A,[B,C]_{_{\sV}}]_{_{\sV}}+(-1)^{(s-1)(r-1)}[B,[C,A]_{_{\sV}}]_{_{\sV}}+(-1)^{(t-1)(s-1)}[C,[A,B]_{_{\sV}}]_{_{\sV}}=0.$$
\end{itemize}
\end{proposition}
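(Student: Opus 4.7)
The plan is to dispatch (i) by inspection of \eqref{bracket} and to attack (ii) by a direct verification based on a graded Leibniz rule for the directional superderivatives $D_w$.

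For (i), set $T(\gamma,\delta)=\sum_{a\in Q} D_{a^\ast}(\gamma)D_a(\delta)$, so that \eqref{bracket} reads $[\gamma,\delta]_{_{\sV}}=T(\gamma,\delta)-(-1)^{(r-1)(s-1)}T(\delta,\gamma)$. Swapping $\gamma$ and $\delta$ gives $[\delta,\gamma]_{_{\sV}}=T(\delta,\gamma)-(-1)^{(r-1)(s-1)}T(\gamma,\delta)$, and multiplying the former by $-(-1)^{(r-1)(s-1)}$ produces the latter. No use of the cyclic relations \eqref{pq} is needed at this stage.

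For (ii), the first step is to establish a graded Leibniz-type identity for $D_w$ on products, modulo \eqref{pq}. Formula \eqref{D} describes $D_w$ on a monomial as a signed sum, over occurrences of $w$, obtained by opening the cyclic word at that occurrence. For a product $PQ$ of representatives of classes of bidegree $(p,q)$, this sum splits into contributions from occurrences of $w$ in $P$ and in $Q$ separately, and each contribution can be re-expressed, using \eqref{pq} to cyclically rearrange factors, as a product involving $D_w(P)$ or $D_w(Q)$ on one side and the remaining factor on the other, with a sign recording how many dual arrows are transposed past each other.

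The second step is to expand both occurrences of the bracket in each of the three summands of (ii) using \eqref{bracket} together with this Leibniz rule. The outer bracket $[A,[B,C]_{_{\sV}}]_{_{\sV}}$ becomes a signed sum, over ordered pairs of arrows $a,b\in Q$, of quadruple products of derivatives of the schematic form $D_{a^\ast}(A)\,D_{b^\ast}D_a(B)\,D_b(C)$, indexed by the assignment of each of $D_{a^\ast},D_a,D_{b^\ast},D_b$ to one of $A,B,C$. After performing the same expansion in the other two cyclic summands and grouping terms by the combinatorial type of the assignment, one checks that each possible configuration appears in exactly two of the three summands with opposite overall signs.

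The main obstacle is the sign bookkeeping: each cyclic rearrangement via \eqref{pq} contributes a $(-1)^{(\text{moved degree})(\text{passed degree})}$, and these signs must combine with those already present in \eqref{bracket}, in the Leibniz rule, and with the outer factors $(-1)^{(r-1)(t-1)}$, $(-1)^{(s-1)(r-1)}$, $(-1)^{(t-1)(s-1)}$ in (ii), so as to produce a cancellation configuration-by-configuration. A more conceptual alternative that avoids this combinatorial check is to appeal to the interpretations of $\sV Q$ developed in \cite{L, PW, vB}: under the identification of $\sV Q$ with the shifted necklace algebra, or equivalently with a cyclic quotient of the graded algebra of double derivations of $\cx Q$, the bracket $[\,\cdot\,,\,\cdot\,]_{_{\sV}}$ corresponds to a standard graded Schouten-type bracket for which the graded Jacobi identity is a classical fact.
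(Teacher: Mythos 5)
Your proposal matches the paper's own treatment: part (i) is read off directly from \eqref{bracket}, and part (ii) is handled either by the direct (admittedly lengthy) sign-bookkeeping computation you outline or, as your fallback, by invoking the corresponding property of double derivations in \cite{vB} together with Theorem 1 of \cite{PW} --- which is exactly the alternative the paper cites. The extra detail you give on organizing the direct verification (Leibniz rule for $D_w$, grouping by configuration, pairwise cancellation across the three cyclic summands) is consistent with what the paper calls a ``straightforward, if lengthy, computation,'' so no gap to flag.
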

\begin{proof}
 Part (i) is obvious from the definition. Part (ii) is a  straightforward, if lengthy, computation. Alternatively, it follows from the corresponding property for double derivations in \cite{vB} and Theorem 1 in \cite{PW}.
\end{proof}

 Therefore $\bigl(\sV Q,[\,,\,]_{_{\sV}}\bigr)$  is a graded Lie (super)algebra, called the {\em graded necklace Lie algebra of the quiver $Q$}.  As far as we know, it was first considered by Lazaroiu \cite{L}. It the present context, it has been studied by Pichereau and  Van de Weyer \cite{PW}.
 Its significance lies in the fact that the Schouten bracket on $\sV Q$ induces the standard Schouten bracket on $G$-invariant vector fields on any representation space $\Rep(Q;\alpha)$ of the quiver $Q$ ($G$ defined in \eqref{G}). Let us write $M=\Rep(Q;\alpha)$ and $\sV(M)^G$ for the $G$-invariant polynomial polyvector fields on $M$. Recall the formula for the Schouten bracket on polyvector fields on any manifold $M$:
$$ [X_1\cdots X_r,Y_1\cdots Y_s]=\sum_{i,j} (-1)^{i+j}[X_i,Y_j]X_1\cdots X_{i-1}X_{i+1}\cdots  X_rY_1\cdots Y_{j-1}Y_{j+1}\cdots Y_s,
$$
\vspace{-3mm}

where $X_i$ and $Y_j$ are vector fields and the product is the wedge product.
The following is a direct proof, without passing to double derivations, of a result known from \cite{L,PW}:
\begin{theorem}[van den Bergh-Pichereau-van de Weyer] Let $M=\Rep(Q;\alpha)$ be a representation space of a quiver $Q$. The map $\Psi:\sV Q\to \sV(M)^G$ commutes with the respective Schouten brackets.
\label{commute}\end{theorem}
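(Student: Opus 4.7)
The plan is to verify the identity directly on monomial representatives, comparing both sides via an explicit coordinate description. By bilinearity, one may take $\gamma = P_1 a_1^\ast\cdots P_r a_r^\ast$ and $\delta = R_1 b_1^\ast\cdots R_s b_s^\ast$ with $P_i, R_j\in \cx Q$. I would introduce the matrix-entry coordinates $x_a^{ij}$ on $M = \Rep(Q,\alpha)$ and unravel $\Psi(\gamma)$ in these coordinates: it becomes a sum of monomial polyvector fields with coefficient functions $c(X)$ obtained from the trace formula defining $\Psi$ by inserting matrix units $E_{i_k j_k}$ in place of the $\tilde Y^k_{a_k}$ slots, followed by antisymmetrization.

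The heart of the argument is a small dictionary between the two operations appearing in \eqref{bracket} and two natural operations on polyvector fields. For any arrow $b\in Q$:
\begin{itemize}
\item[(A)] $\Psi(D_b\gamma)$ expresses the partial derivatives, with respect to the entries $x_b^{ij}$, of the coefficient functions of $\Psi(\gamma)$; each summand in \eqref{D} corresponding to an occurrence of $b$ among the $P_i$'s of $\gamma$ is exactly that occurrence being differentiated.
\item[(B)] $\Psi(D_{b^\ast}\gamma)$ expresses the extraction of a $\partial/\partial x_b$-slot from $\Psi(\gamma)$; the sign $(-1)^{\lambda_i\mu_i}$ of \eqref{D} is exactly the sign required to move the extracted slot to the first position of the remaining wedge product and to cyclically re-align the trace.
\end{itemize}
Both correspondences are obtained by unwinding the trace formula and invoking cyclicity of the ordinary trace on products which contain a matrix-unit factor.

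With this dictionary in hand, one expands $[\Psi(\gamma),\Psi(\delta)]$ using the standard Schouten formula
\[
[V_1\wedge\cdots\wedge V_r,\, W_1\wedge\cdots\wedge W_s] = \sum_{i,j}(-1)^{i+j}[V_i,W_j]\,V_1\wedge\cdots\wedge\widehat{V_i}\wedge\cdots\wedge W_1\wedge\cdots\wedge\widehat{W_j}\wedge\cdots.
\]
Each Lie bracket $[V_i,W_j]$ reduces to the derivation of the coefficient functions of $W_j$ along the direction of $V_i$, minus the symmetric term. By the dictionary, the non-vanishing contributions arise from pairing a $\partial/\partial x_b$-slot of $\Psi(\gamma)$ (coming from a $b^\ast$ in $\gamma$, isolated via (B)) with an occurrence of the arrow $b$ in the coefficients of $W_j$ in $\Psi(\delta)$ (picked up via (A)); these assemble into $\Psi\bigl(D_{b^\ast}(\gamma)\,D_b(\delta)\bigr)$. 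Summing over $b\in Q$ and adding the symmetric contribution from swapping the roles of $\gamma$ and $\delta$ reproduces the necklace bracket $\Psi([\gamma,\delta]_{\sV})$ as defined in \eqref{bracket}.

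The main obstacle is the coherent reconciliation of the signs involved: the antisymmetrization signs producing wedges out of tensor products, the $(-1)^{i+j}$ of the Schouten formula, the graded sign $(-1)^{(r-1)(s-1)}$ of \eqref{bracket}, and the internal superderivative signs $(-1)^{\lambda_i\mu_i}$ of \eqref{D}, all taken modulo the cyclic anticommutation \eqref{pq}. Conceptually each of these records the parity of the permutation needed to bring a chosen dual-arrow slot to a canonical position in a cyclic word, so they must agree; verifying the agreement cleanly, rather than by brute bookkeeping, is where the technical work lies.
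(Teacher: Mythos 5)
Your proposal is correct and follows essentially the same route as the paper: the paper likewise reduces to monomials $P_1a_1^\ast\cdots P_ra_r^\ast$, writes $\Psi(\gamma)$ explicitly in the matrix-entry coordinates, and reorganises the commutative Schouten bracket into the form \eqref{ST} precisely so that its signs match those of \eqref{bracket} and \eqref{D} --- which is exactly your ``dictionary'' identifying $D_b$ with differentiation of coefficient functions and $D_{b^\ast}$ with extraction of a $\partial/\partial x_b$-slot. The paper, like you, leaves the final sign bookkeeping as a direct computation, so your sketch matches it in both strategy and level of detail.
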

\begin{proof} The manifold $M$ has canonical global coordinates $a^{ij}$ given by the entries of matrices $X_a$, $a\in Q$. In general, having chosen global coordinates $x_i$, $i=1,\dots, N$, we can write a polynomial vector field as
$X=\sum_{i=1}^N f_i(x)\frac{\partial}{\partial x_i}$, where $f_i(x)$ is polynomial in the $x_i$. For another such vector field $Y= \sum_{i=1}^N g_i(x)\frac{\partial}{\partial x_i}$, we put
$$ X(Y)= \sum_{i=1}^N (X.g_i)(x)\frac{\partial}{\partial x_i},$$
so that $[X,Y]=X(Y)-Y(X)$. We can then rewrite the above formula for the Schouten bracket of two polyvector fields as
\begin{multline} \sum_{i,j}  (-1)^{i(r-i)+(j-1)(s-j+1)}S_i X_i(Y_j) T_j \\- (-1)^{(r-1)(s-1)} (-1)^{j(s-j)+(i-1)(r-i+1)}T_j Y_j(X_i) S_i,\label{ST}\end{multline}
where $S_i$ and $T_j$ denote the following polyvector fields:
$$ S_i=X_{i+1}\cdots X_rX_1\dots X_{i-1},\quad\enskip T_j=Y_{j+1}\cdots Y_sY_1\dots Y_{j-1},$$
and the product is the wedge product. Observe that the exponents are the same as in \eqref{bracket} and \eqref{D}.
\par
If  $\gamma\in \sV^r Q$ and $ \delta\in \sV^s Q$ are represented, respectively, by monomials
$$ P_1a_1^\ast\cdots P_r a_r^\ast,\quad\enskip Q_1 b_1^\ast\cdots Q_s b_s^\ast,$$ then $\Psi(\gamma)(A)$, in the canonical global coordinates $a^{mn}$, is   equal to (up to a constant factor depending on the choice of $\tr_\lambda$):
$$ \sum P_1(A)_{m_1n_1}\frac{\partial}{\partial a_1^{m_2n_1}}\wedge P_2(A)_{m_2n_2}\frac{\partial}{\partial a_2^{m_3n_2}}\wedge \dots \wedge P_r(A)_{m_rn_r}\frac{\partial}{\partial a_r^{m_1n_r}},$$
where the sum ranges over all $m_1,\dots,m_r$ and $n_1,\dots,n_r$ describing the coordinates of matrices $P_i(A)$, $i=1,\dots,r$. A similar formula holds of course  for $\Psi(\delta)$. Now a computation shows that the Schouten bracket of $\Psi(\gamma)$ and $\Psi(\delta)$, written in the form \eqref{ST}, is the same as  $\Psi\bigl([\gamma,\delta]_{_{\sV}}\bigr)$.
\end{proof}

\section{Poisson structures\label{bloody}}

Once the Schouten bracket has been defined, Poisson structures are defined in the usual manner:
\begin{definition} Let $Q$ be a quiver. A {\em Poisson structure on $\cx Q$} is an element $\Pi$ of $\sV^2 Q$ such that $[\Pi,\Pi]_{_{\sV}}=0$.\label{fish}
\end{definition}
\begin{remark} This definition is due to Pichereau and Van de Weyer \cite{PW}, who reinterpreted Van den Bergh's double Poisson structures \cite{vB} in terms of the graded necklace Lie algebra (cf. \S\ref{relate} below).
\end{remark}

Theorem \ref{commute} implies immediately:
\begin{corollary}
 Let $\Pi$ be a Poisson structure on $\cx Q$ and let $\alpha$ be a dimension vector. Then $\Psi(\Pi)$ is a $G$-invariant Poisson structure on $\Rep(Q;\alpha)$.\hfill $\Box$
\end{corollary}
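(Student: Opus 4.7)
The plan is essentially to read off the statement from Theorem \ref{commute} together with Definition \ref{fish}. First I would observe that $\Psi:\sV Q\to \sV(M)^G$ is grading-preserving by construction, so $\Psi(\Pi)\in \sV^2(M)^G$ is automatically a $G$-invariant bivector field on $M=\Rep(Q,\alpha)$; the $G$-invariance is already built into the definition of $\Psi$, since it factors through the $G$-invariant trace $\tr_\lambda$ applied to a $G$-equivariant $R$-valued tensor field.

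Next, I would reduce ``$\Psi(\Pi)$ is Poisson'' to the single identity $[\Psi(\Pi),\Psi(\Pi)]=0$, where the bracket is the ordinary Schouten bracket of polyvector fields on the affine variety $M$. This is the standard criterion: for a bivector $\pi$, the skew-symmetric pairing $\{f,g\}:=\pi(df,dg)$ satisfies the Jacobi identity if and only if $[\pi,\pi]=0$.

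Finally, I would apply Theorem \ref{commute}, which says $\Psi$ intertwines the necklace Schouten bracket on $\sV Q$ with the Schouten bracket on $\sV(M)^G$. Consequently
\begin{equation*}
[\Psi(\Pi),\Psi(\Pi)]\;=\;\Psi\bigl([\Pi,\Pi]_{_{\sV}}\bigr)\;=\;\Psi(0)\;=\;0,
\end{equation*}
where the middle equality is the hypothesis $[\Pi,\Pi]_{_{\sV}}=0$ coming from Definition \ref{fish}. Combined with the invariance noted above, this exhibits $\Psi(\Pi)$ as a $G$-invariant Poisson tensor on $\Rep(Q,\alpha)$.

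There is no real obstacle here; all the content sits in Theorem \ref{commute}, and the corollary is just the combination of that compatibility with the definitions. The only point worth double-checking — and it is immediate from the diffeomorphism naturality of the Schouten bracket — is that the Schouten bracket of two $G$-invariant polyvector fields on $M$ is again $G$-invariant, so that both sides of the displayed equation genuinely live in $\sV(M)^G$.
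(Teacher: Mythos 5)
Your proposal is correct and matches the paper's reasoning: the corollary is stated there as an immediate consequence of Theorem \ref{commute}, via exactly the computation $[\Psi(\Pi),\Psi(\Pi)]=\Psi([\Pi,\Pi]_{_{\sV}})=0$, with $G$-invariance built into the construction of $\Psi$ through $\tr_\lambda$. Nothing further is needed.
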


An element $\Pi$ of $\sV^2 Q$ can be written as
\begin{equation}
 \Pi=\sum_{a,b\in Q}\sum_{P,R} [P_{}a^\ast, R b^\ast],\label{Pi}
\end{equation}
where $P,R\in \cx Q$,  and the nonzero monomials in $Pa^\ast Rb^\ast$  correspond to closed paths. If $\Pi$ is a Poisson structure, then the Poisson bracket on coordinate functions in  $\Rep(Q;\alpha)$ is:
\begin{equation} \{a_{ij},b_{kl}\}(X)=  2c \sum_{P,R}P(X)_{kj}R(X)_{il},\label{induce}\end{equation}
where $c$ depends linearly on the $\lambda_i$ in \eqref{trace} used to determine 
 $\Psi=\tr_\lambda \Phi$.

\medskip

\begin{remark} By construction, the induced Poisson structure on each $\Rep(Q,\alpha)$ is $G$-invariant, and, hence, it descends to a Poisson structure on  $\Rep(Q,\alpha)/\hspace{-1mm}/ G$ and  $\Rep(Q,\alpha)/\hspace{-1mm}/_{\chi} G$, for any character $\chi$. As the quotient spaces are not necessarily smooth, a Poisson structure means here a Poisson bracket on the structure sheaf.\end{remark}

The above corollary has a converse, which shows that the above definition is indeed the most natural one.
\begin{theorem} Let $Q$ be a quiver and $\Pi\in \sV^2 Q$. If $\Psi(\Pi)$ is a Poisson structure on each $\Rep(Q;\alpha)$, then $\Pi$ is a Poisson structure on $\cx Q$, i.e. $[\Pi,\Pi]_{_{\sV}}=0$.\label{converse}
\end{theorem}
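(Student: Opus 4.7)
The plan is to combine Theorem~\ref{commute} with an injectivity statement for the family of maps $\Psi_\lambda:\sV Q \to \sV(\Rep(Q,\alpha))^G$ as $\alpha$ varies. By Theorem~\ref{commute}, $\Psi_\lambda$ is a homomorphism of graded Lie superalgebras, so for every dimension vector $\alpha$ and every $\lambda$,
$$ \Psi_\lambda\bigl([\Pi,\Pi]_{_{\sV}}\bigr)=\bigl[\Psi_\lambda(\Pi),\Psi_\lambda(\Pi)\bigr]$$
in $\sV^3(\Rep(Q,\alpha))^G$. The hypothesis that each $\Psi_\lambda(\Pi)$ is a Poisson bivector makes the right-hand side vanish for every $\alpha$. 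Hence $[\Pi,\Pi]_{_{\sV}}\in \sV^3 Q$ lies in the intersection of the kernels of all the maps $\Psi_\lambda$, and the theorem is reduced to showing that this intersection is zero.

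Thus it suffices to prove the following separation lemma: if $\eta\in \sV^r Q$ satisfies $\Psi_\lambda(\eta)=0$ in $\sV^r(\Rep(Q,\alpha))^G$ for every $\alpha$ (and, say, $\lambda=(1,\ldots,1)$), then $\eta=0$ in $\sV Q$. To prove it, I would write $\eta$ as a finite linear combination $\eta=\sum_w c_w\, P_1^w a_1^\ast\cdots P_r^w a_r^\ast$ of necklace monomials, taken modulo the relations~\eqref{pq}. Using the canonical global coordinates $a^{ij}$ on $\Rep(Q,\alpha)$ and the explicit formula for $\Psi$ obtained in the proof of Theorem~\ref{commute}, the coefficient of $\frac{\partial}{\partial a_1^{i_1j_1}}\wedge\cdots\wedge\frac{\partial}{\partial a_r^{i_rj_r}}$ in $\Psi_\lambda(\eta)|_X$ is a polynomial in the matrix entries $X_a^{mn}$ whose shape is dictated by the combinatorics of the necklaces. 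Taking $\alpha$ componentwise large and varying the multi-indices $(i_s,j_s)$ so as to isolate a specific cyclic class, one can separate the contributions of different necklaces: once the components of $\alpha$ exceed the lengths of the paths involved, Procesi's theorem on trace identities guarantees that pairwise inequivalent closed cyclic words in the $X_a$ yield linearly independent $G$-invariant trace polynomials, forcing each $c_w$ to vanish.

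The main obstacle I foresee is the bookkeeping inside this separation lemma. One has to match the cyclic anticommutation relations~\eqref{pq} defining $\sV^r Q$ precisely against the cyclic invariance of traces and the skew-symmetry of the wedge product, so that no cancellations between distinct necklace representatives of the same class hide a nontrivial element of $\sV Q$ in the kernel. In particular, one must be careful that the signs appearing in~\eqref{D} and~\eqref{bracket} are exactly those produced by reshuffling the $\partial/\partial a^{ij}$ into canonical order, which is a matter of careful sign tracking rather than deep technique. Once the separation lemma is in hand, the theorem follows immediately by applying Theorem~\ref{commute} to $\Pi$ and the lemma to $\eta=[\Pi,\Pi]_{_{\sV}}$.
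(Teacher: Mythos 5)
Your argument is essentially the paper's: both reduce, via Theorem~\ref{commute}, to the separation statement that an element of $\sV^3 Q$ killed by $\Psi$ on every $\Rep(Q,\alpha)$ must vanish, and both rest on the fact that traces of inequivalent necklace words become linearly independent invariants once the dimension vector is large. The only difference is presentational: the paper notes that $\sV^3 Q$ is $\cx\ol Q^3$ modulo commutators and cites Ginzburg's direct-limit argument ($\tr_\infty(f)=0\Rightarrow f\in[\cx\ol Q,\cx\ol Q]$) for this separation fact, whereas you sketch the same fact directly via Procesi's theory of trace identities, with the sign bookkeeping you flag being routine.
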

\begin{proof} Let $\gamma=[\Pi,\Pi]_{_{\sV}}$. We need to show that, if $\Psi(\gamma)=0$ on each 
$\Rep(Q;\alpha)$, then $\gamma=0$. The argument is the same as in \cite[pp.390--391]{Gin}. One considers the direct limit $V_\infty$ of $B$-modules over all finite-dimensional $B$-modules, and the ind-group $G_\infty$ obtained as the direct limit of the $G$ in \eqref{G}. The direct limit of the trace maps $\tr_\lambda$ is a map $\tr_\infty: \cx \ol Q/[\cx \ol Q, \cx \ol Q]\to \Rep(\cx\ol Q,V_\infty)^{G_\infty}$.  Since $\sV^3 Q$ is $\cx \ol Q^3$ modulo commutators, the statement follows from the following fact, which is proved as in the paper of Ginzburg \cite{Gin}, cited above: if $f\in \cx \ol Q$ satisfies $\tr_\infty(f)=0$, then $f\in [\cx \ol Q, \cx \ol Q]$.
\end{proof}

\subsection{Relation with Poisson structures of Van den Bergh and Crawley-Boevey\label{relate}}
 In \cite{vB}, Van den Bergh defines {\em double Poisson structures} on associative algebras via double brackets, i.e. bilinear maps $\{\hspace{-1mm}\{\,,\,\}\hspace{-1mm}\}:A\times A\to A\otimes A$, which are derivation in the second argument and are skew-symmetric, in the sense that 
$\{\hspace{-1mm}\{a,b\}\hspace{-1mm}\}=-\{\hspace{-1mm}\{b,a\}\hspace{-1mm}\}^\circ$, where $(u\otimes v)^\circ= v\otimes u$. It is clear that, for $A=\cx Q$, there is a natural correspondence between elements of $\sV^2 Q$ and double brackets over $B$: for $\Pi$ as in \eqref{Pi}, we define $\{\hspace{-1mm}\{\,,\,\}\hspace{-1mm}\}$ on generators $a\in Q$ by $\{\hspace{-1mm}\{a,b\}\hspace{-1mm}\}=P_{ab}\otimes Q_{ab}\in \cx Q\otimes \cx Q$. The condition $[\Pi,\Pi]_{_{\sV}}=0$ is equivalent to Van den Bergh's version of Jacobi identity. Thus Poisson structures on $\cx Q$ are in bijection with double Poisson structures, and such a corresponding pair $\Pi$, $\{\hspace{-1mm}\{\,,\,\}\hspace{-1mm}\}$ induces the same Poisson structure \eqref{induce} on any $\Rep(Q;\alpha)$. Van den Bergh's double Poisson structures for $\cx Q$ have been translated into the language of necklace Lie superalgebra by Pichereau and Van de Weyer \cite{PW}.

\medskip

On the other hand,  Crawley-Boevey defines in \cite{CB} {\em $H_0$-Poisson structures}  as Lie brackets $\{\,,\,\}$ on $A/[A,A]$ such that each $\{f,\cdot\}$ is induced by a derivation $d_f$ of $A$. It is known \cite[Lemma 2.6.2]{vB} that double Poisson structures induce $H_0$-Poisson structures, and, consequently, a Poisson structure on $\cx Q$ in the sense of Definition \ref{fish} induces an  $H_0$-Poisson structure on $\cx Q$.  Let us describe this explicitly.
\par
For any  $w\in Q$, define the ``directional derivative'' $D_w:\cx Q/[\cx Q,\cx Q]\to \cx Q$ by the formula \cite{Gin} (cf. \eqref{D})
\begin{equation} D_w (x_1\dots x_n)=\sum_{x_i=w} x_{i+1}\dots x_nx_1\dots x_{i-1}\enskip \mod [\cx Q,\cx Q],\label{Dw}\end{equation}
for any path $x_1\dots x_n$. Let $\Pi$ be a Poisson structure given by \eqref{Pi}. Then the corresponding $H_0$-Poisson structure is defined by:
$$  \{f,g\}=\sum_{a,b\in Q}\sum_{P,R} P D_a( f) R D_b(g) \enskip \mod [\cx Q,\cx Q],$$
for any $f,g\in \cx Q/[\cx Q,\cx Q]$. The derivation $d_f$ which induces $\{f,\cdot\}$ is determined by its values on generators as follows:
$$ b\mapsto \sum_{a\in Q}\sum_{P,R} PD_a( f) R,\quad b\in Q.$$

\subsection{Constant and linear Poisson structures}

\begin{definition} An element  $\Pi$ of the form \eqref{Pi} in $ \sV^2 Q$ is called {\em homogeneous of degree $p$} if the length of each nonzero closed path $P_{ab}a^\ast Q_{ab}b^\ast$ is $p+2$.\end{definition}

Such $\Pi$ induce homogeneous bivectors on each $\Rep(Q,\alpha)$ via \eqref{induce}. We have the obvious:
\begin{proposition} Let $\Pi\in \sV^2 Q$ be a Poisson structure on $\cx Q$ such that $\Pi=\Pi^1+\Pi^2$, where $\Pi^1$ and $\Pi^2$ are  homogeneous of different degrees. Then $\Pi^1$ and $\Pi^2$ are compatible Poisson structures on $\cx Q$, i.e. $ [\Pi_i,\Pi_j]_{_{\sV}}=0$ for $i,j=1,2$.\hfill $\Box$\label{homog}\end{proposition}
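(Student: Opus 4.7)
The plan is to expand $[\Pi,\Pi]_{_{\sV}}=0$ by bilinearity and exploit the fact that the three resulting summands live in distinct homogeneous components of $\sV^3 Q$, graded by the total length of the closed paths that constitute them.

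First I would verify that the Schouten bracket respects the ``total length'' grading on $\sV Q$: if $\gamma\in \sV^r Q$ is a sum of closed paths of common length $L_\gamma$ and $\delta\in \sV^s Q$ is a sum of closed paths of common length $L_\delta$, then $[\gamma,\delta]_{_{\sV}}\in \sV^{r+s-1} Q$ is a sum of closed paths of common length $L_\gamma+L_\delta-2$. This is immediate from \eqref{bracket} and \eqref{D}: the superderivative $D_w$ removes exactly one occurrence of the letter $w$ from each monomial, so $D_{a^\ast}(\gamma)$ is supported in length $L_\gamma-1$ and $D_a(\delta)$ in length $L_\delta-1$; their concatenation then has length $L_\gamma+L_\delta-2$. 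The cyclic anticommutation relations \eqref{pq} preserve path length, so the grading descends to $\sV Q$.

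Next, I expand
$$[\Pi,\Pi]_{_{\sV}}=[\Pi^1,\Pi^1]_{_{\sV}}+[\Pi^1,\Pi^2]_{_{\sV}}+[\Pi^2,\Pi^1]_{_{\sV}}+[\Pi^2,\Pi^2]_{_{\sV}}=0.$$
By graded antisymmetry (i) with $r=s=2$, one has $(-1)^{(r-1)(s-1)}=-1$, so $[\Pi^1,\Pi^2]_{_{\sV}}=[\Pi^2,\Pi^1]_{_{\sV}}$, and the middle two terms combine to give
$$[\Pi^1,\Pi^1]_{_{\sV}}+2[\Pi^1,\Pi^2]_{_{\sV}}+[\Pi^2,\Pi^2]_{_{\sV}}=0.$$

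Finally, applying the length count to the homogeneous pieces $\Pi^i$ (closed paths of length $p_i+2$), the three summands consist of closed paths of lengths $2p_1+2$, $\ p_1+p_2+2$, and $2p_2+2$ respectively. Since $p_1\neq p_2$, these three values are pairwise distinct, so the three summands lie in mutually complementary direct summands of $\sV^3 Q$. Each must therefore vanish individually, which yields $[\Pi^i,\Pi^j]_{_{\sV}}=0$ for $i,j\in\{1,2\}$ and establishes compatibility. The only (minor) technical point to be careful about is the verification that the length-grading is well defined on $\sV Q$ (not just on $\cx\ol Q$), which the relations \eqref{pq} make routine.
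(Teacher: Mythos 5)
Your proof is correct and is exactly the argument the paper has in mind: the proposition is stated as "obvious" with no written proof, and the intended justification is precisely your observation that the Schouten bracket respects the path-length grading, so the three homogeneous components of $[\Pi,\Pi]_{_{\sV}}=0$ (lengths $2p_1+2$, $p_1+p_2+2$, $2p_2+2$, pairwise distinct since $p_1\neq p_2$) must vanish separately. Your care about the grading descending through the relations \eqref{pq} and the sign check giving $[\Pi^1,\Pi^2]_{_{\sV}}=[\Pi^2,\Pi^1]_{_{\sV}}$ are both correct.
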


We discuss briefly constant and  linear bivectors in $\sV^2 Q$.
\par
 Clearly, any constant $\Pi$ defines a Poisson structure. Such a $\Pi$ is given by $|Q|\times |Q|$-matrix $J$, which is skew-symmetric and $J_{ab}=0$ if $a$ and $b$ do not form a cycle (i.e. $J_{ab}$ can be nonzero only if  $h(a)=t(b)$ and $t(a)=h(b)$). In particular, if $Q$ itself is a double quiver, $Q=\ol K$, then $J$ defined by $J_{aa^\ast}=1$, $J_{a^\ast a}=-1$, if $a\in K$, and all other entries of $J$ equal to zero, is a nondegenerate Poisson (symplectic) structure on $\cx \ol K$ considered by Ginzburg \cite{Gin} and many other authors. It induces the canonical symplectic structure on $\Rep(Q,\alpha)=T^\ast \Rep(K,\alpha)$. 
\par
Let us now discuss linear Poisson structures. We can represent a linear element of $\sV^2 Q$ uniquely as 
\begin{equation} \Pi=\sum_{x,a,b\in Q} J^{x}_{ab} xa^\ast b^\ast,\label{linear}\end{equation}
where $J^x_{ab}$ must be zero unless $xa^\ast b^\ast$ is a closed path. There is no skew-symmetry condition. We compute the Schouten bracket $[\Pi,\Pi]_{_{\sV}}$ from \eqref{bracket}:
$$ [\Pi,\Pi]_{_{\sV}}=4\sum_{x,a,b,c\in Q}\sum_{y\in Q}\left(J^x_{ay}J^y_{bc}-J^x_{yc}J^y_{ab}\right)xa^\ast b^\ast c^\ast.$$
The vanishing of $[\Pi,\Pi]_{_{\sV}}$ is, therefore, equivalent to $J^x_{ab}$ being structure constants of an associative algebra structure on $\cx^{|Q|}$. If we write the standard basis of $\cx^{|Q|}$ as $\{e_a;\, a\in Q\}$, then the multiplication is given by 
$$e_a\cdot e_b= \sum_x J^x_{ab}e_x,\quad a,b,x\in Q.$$
The algebra structure is clearly compatible with the canonical $B$-bimodule structure on $\cx^{|Q|}$, and so
\begin{proposition}[Pichereau and Van de Weyer \cite{PW}] Linear Poisson structures on $\cx Q$ are in 1-1 correspondence with associative algebra structures on $\cx^{|Q|}$, compatible with the canonical $B$-bimodule structure.\hfill $\Box$\label{ass1}
\end{proposition}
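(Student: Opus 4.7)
The plan is to assemble the observations in the paragraph preceding the proposition into an explicit bijection; the bracket computation has already been done for us.

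First I would verify the normal form \eqref{linear}. A linear element of $\sV^2 Q$ is represented by an element of $\cx \ol Q^2$ of total path-length three, hence containing exactly one non-starred arrow $x$. Using the cyclic anti-commutation relations \eqref{pq}, I move $x$ to the front and bring $\Pi$ into the form $\sum_{x,a,b} J^x_{ab}\, xa^\ast b^\ast$. A monomial in $\cx \ol Q$ vanishes unless its factors concatenate into a valid path, and every cyclic permutation of a nonzero monomial must again be a valid path; hence $J^x_{ab}$ is forced to vanish unless $xa^\ast b^\ast$ is a closed path. (The residual ambiguity $xa^\ast b^\ast = -xb^\ast a^\ast$ is absorbed by fixing an order on the arrows.) Now set $e_a \cdot e_b := \sum_x J^x_{ab} e_x$ on $\cx^{|Q|}$: the three endpoint constraints encoded by the closed-path condition on $xa^\ast b^\ast$ are precisely the statement that $e_a \cdot e_b = 0$ unless the head of $a$ and the tail of $b$ match, and that $e_a \cdot e_b$ lies in the correct $B$-isotypic component. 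Conversely, every $B$-compatible bilinear product on $\cx^{|Q|}$ has structure constants supported on closed paths in exactly this sense, so the two assignments are mutually inverse.

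Finally, I would invoke the Schouten bracket formula computed just above the proposition,
$$[\Pi,\Pi]_{\sV} \;=\; 4\sum_{x,a,b,c\in Q}\sum_{y\in Q}\bigl(J^x_{ay}J^y_{bc}-J^x_{yc}J^y_{ab}\bigr)\,xa^\ast b^\ast c^\ast,$$
which shows that $[\Pi,\Pi]_{\sV}=0$ is equivalent to the identities $\sum_y J^x_{ay}J^y_{bc} = \sum_y J^x_{yc}J^y_{ab}$ for every admissible quadruple $(x,a,b,c)$. The two sides are precisely the coefficients of $e_x$ in $e_a\cdot(e_b\cdot e_c)$ and in $(e_a\cdot e_b)\cdot e_c$ respectively, so this is exactly associativity of the product on $\cx^{|Q|}$.

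The only real obstacle is a bookkeeping one: verifying that the normal form \eqref{linear} yields a genuine bijection after passing to the quotient by \eqref{pq}, and that the endpoint constraints really unwind as claimed in the second step. Once this is settled, the identification of the Jacobi condition with associativity is immediate from the bracket formula.
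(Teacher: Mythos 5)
Your overall route is the paper's: bring a linear element into the normal form \eqref{linear}, observe that the coefficients are supported on closed paths, and read off from the displayed Schouten bracket that $[\Pi,\Pi]_{_{\sV}}=0$ is coefficientwise the associativity identity $\sum_y J^x_{ay}J^y_{bc}=\sum_y J^x_{yc}J^y_{ab}$, with the closed-path support condition matching $B$-bimodule compatibility. That part is correct and is exactly how the paper argues.

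However, your parenthetical ``the residual ambiguity $xa^\ast b^\ast=-xb^\ast a^\ast$ is absorbed by fixing an order on the arrows'' is a genuine error, and it is not a harmless one. The relations \eqref{pq} only identify a monomial with its \emph{cyclic} rotations (with Koszul signs): the cyclic class of $xa^\ast b^\ast$ consists of $xa^\ast b^\ast$, $\pm\, a^\ast b^\ast x$ and $\pm\, b^\ast x a^\ast$, and for $a\neq b$ the word $xb^\ast a^\ast$ lies in a different cyclic class, so $xa^\ast b^\ast$ and $xb^\ast a^\ast$ remain linearly independent in $\sV Q$. (Contrast the quadratic case \eqref{quadratic}, where $xa^\ast y b^\ast$ and $yb^\ast xa^\ast$ \emph{are} cyclic rotations of one another, which is why a skew-symmetry condition appears there but not here; the paper states explicitly that in the linear case there is no skew-symmetry condition.) If you ``fix an order on the arrows'' and thereby force $J^x_{ab}=-J^x_{ba}$, the correspondence you construct only reaches products with skew structure constants, and the proposition fails: for instance the structure $xx^\ast x^\ast$ of Example \ref{free}, corresponding to $e_x\cdot e_x=e_x$ and inducing the standard Lie--Poisson structure, would be excluded. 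The fix is simply to delete the parenthetical: the expression \eqref{linear} is already the unique normal form, with both $J^x_{ab}$ and $J^x_{ba}$ as independent coefficients, and the rest of your argument then goes through as written.
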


More explicitly, let  us decompose $\cx^{|Q|}$ as $\bigoplus_{i,j=1}^k E_{i,j}$, where $E_{i,j}$ is generated by the $e_a$ with $t(a)=i$ and $h(a)=j$ (i.e. by arrows from $i$ to $j$), and $E_{i,j}=0$ if there are no arrows from $i$ to $j$. Then a reformulation of Proposition \ref{ass1} is:
\begin{proposition} Linear Poisson structures on $\cx Q$ are in 1-1 correspondence with associative algebra structures on $\cx^{|Q|}$, such that
\begin{itemize}
\item[(i)] $E_{i,j}E_{k,l}=0$ if $j\neq k$;
\item[(ii)] $E_{i,j}E_{j,k}\subset E_{i,k}$.\hfill $\Box$
\end{itemize}\label{ass}
\end{proposition}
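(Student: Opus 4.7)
This proposition is a purely formal restatement of Proposition \ref{ass1}, obtained by translating ``compatibility with the canonical $B$-bimodule structure'' into conditions (i) and (ii). The plan is to proceed in three short steps.

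First, I would observe that for any $B$-bimodule $M$ one has the canonical isotypic decomposition $M=\bigoplus_{i,j=1}^k e_iMe_j$, and for $M=\cx^{|Q|}\subset \cx Q$ this decomposition is precisely the one displayed in the statement: the component $e_iMe_j$ is spanned by the arrows $a$ with $e_i\cdot a\cdot e_j=a$, which, under the convention adopted in \S\ref{quivers}, are exactly the arrows between the appropriate source and target vertices. Thus the subspaces $E_{i,j}$ are the simultaneous eigenspaces of left- and right-multiplication by the idempotents of $B$, and the data of the decomposition $\cx^{|Q|}=\bigoplus E_{i,j}$ is equivalent to the data of the canonical $B$-bimodule structure.

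Second, let $\mu$ be an associative multiplication on $\cx^{|Q|}$. Compatibility with the $B$-bimodule structure is the requirement that $\mu$ be $(B,B)$-bilinear and factor through $\cx^{|Q|}\otimes_B \cx^{|Q|}$; equivalently, one asks that $\mu(bx,y)=b\mu(x,y)$, $\mu(x,yb)=\mu(x,y)b$ and $\mu(xb,y)=\mu(x,by)$ for all $b\in B$. Testing these identities on idempotents $b=e_m$ and on elements $x\in E_{i,j}$, $y\in E_{k,l}$, the ``centrality'' relation $\mu(xe_m,y)=\mu(x,e_my)$ forces $\mu(x,y)=0$ whenever the inner weights of $x$ and $y$ disagree, giving condition (i). The two one-sided relations pin down the outer weights of $\mu(x,y)$ to match those of $x$ and $y$, giving condition (ii). Conversely, any associative structure satisfying (i) and (ii) automatically respects the bimodule structure, since the product then lands in the isotypic component prescribed by the outer weights.

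Combining the two steps yields the bijection claimed, via the identification supplied by Proposition \ref{ass1}. There is no substantive obstacle: the only point requiring real care is the bookkeeping of which index of $E_{i,j}$ corresponds to the left $B$-weight and which to the right -- once this is fixed and checked to be consistent with the convention on path composition used in \S\ref{quivers}, the proof is essentially tautological.
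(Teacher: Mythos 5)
Your proposal is correct and matches the paper's intent: the paper gives no separate argument for Proposition \ref{ass}, treating it as an immediate reformulation of Proposition \ref{ass1} via the decomposition $\cx^{|Q|}=\bigoplus_{i,j}E_{i,j}$, and your unpacking of ``compatibility with the canonical $B$-bimodule structure'' into conditions (i) and (ii) (equivalently, the requirement that the structure constants $J^x_{ab}$ be supported on closed paths $xa^\ast b^\ast$) is exactly that reformulation. The only point needing care is the index bookkeeping for $E_{i,j}$ under the right-to-left composition convention, which you flag and which checks out.
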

In particular, there are no nontrivial linear Poisson structures on $\cx Q$, if there are no arrows $a$ such that there is a path of length $2$ from $t(a)$ to $h(a)$.
\begin{example}[cf. \cite{PW}, Proposition 10] Let $Q$ consist of one vertex with $n$ loops attached to it. The linear Poisson structures on $\cx Q$ (the free algebra on $n$ letters) correspond to $n$-dimensional associative algebras. Each such algebra induces a $GL_m(\cx)$-invariant Poisson structure on $\Rep(Q,m)\simeq \gl_m(\cx)\otimes \cx^n$. Since these Poisson structures are linear, they are actually Lie-Poisson structures on $\gl_m(\cx)\otimes \cx^n$. In particular, the Poisson structure induced from the algebra $\bigoplus_{i=1}^n \cx$ (direct sum of $1$-dimensional algebras)  is the direct sum of $n$ copies of the standard Lie-Poisson structure on $\gl_m(\cx)^\ast$.
\par
On the other hand, for $n=2$, the noncommutative Poisson brackets $xx^\ast x^\ast+ yx^\ast y^\ast$ and $xx^\ast x^\ast+ yx^\ast y^\ast + yy^\ast x^\ast $ correspond to semidirect product Lie algebras $\gl_m(\cx)\ltimes \Mat_{m\times m}(\cx)$, where $\gl_m(\cx)$ acts  on $\Mat_{m\times m}(\cx)$ by left matrix multiplication in the first case, and by conjugation in the second case.
 \label{free}
\end{example}
\begin{example} Let $Q$ be the following quiver:
\begin{equation*}
\begin{texdraw}
\drawdim mm
\arrowheadsize l:1 w:1
\arrowheadtype t:H

\lcir r:2
\rmove (-2 0)
\ravec (0 -0.5)
\rmove (4 0.5)
\fcir f:0 r:0.7
\rmove (5.2 0)
\lellip rx:5 ry:1
\rmove (0 1)
\ravec (-1 0)
\rmove (1 -2)
\ravec (1 0)
\rmove (4.4 1)
\fcir f:0 r:0.7

\rmove (2 0)
\lcir r:2
\rmove (2 0)
\ravec (0 -0.5)
\rmove (4 0.5)

\end{texdraw}
\label{Q2}
\end{equation*}
Let us label the loops $a$ and $b$, and the two other paths $x$ and $y$. There is a natural affine (sum of linear and constant) Poisson structure on $\cx Q$ given by:
\begin{equation} aa^\ast a^\ast + bb^\ast b^\ast+ x^\ast y^\ast.\label{aff}\end{equation}
If $\alpha=(k,l)$ is a dimension vector, then $\Rep(Q,\alpha)$ is isomorphic to
$$ \Mat_{k\times k}(\cx)\oplus \Mat_{l\times l}(\cx)\oplus \Mat_{k\times l}(\cx)\oplus \Mat_{l\times k}(\cx).
$$
The induced Poisson structure on $\Rep(Q,\alpha)$ is a linear combination  of the standard Lie-Poisson structures on $\gl_{k}(\cx)^\ast$ and $\gl_l(\cx)^\ast$ and the symplectic structure $\tr dX\wedge dY$ on $\Mat_{k\times l}(\cx)\oplus \Mat_{l\times k}(\cx)$. The quotient Poisson structure on $\Rep(Q,\alpha)/\hspace{-1mm}/ G$, where $G=GL_k(\cx)\times GL_l(\cx)$, corresponds to rank $r$ deformations \cite{AHP}, with symplectic leaves given by coadjoint orbits of loop groups and many interesting applications. 
\par
On the other hand, $\cx Q$ admits also a linear Poisson structure, obtained from  the associative algebra structure of $\Mat_{2\times 2}(\cx)$ on  $\cx^{|Q|}$. The resulting Poisson structure on $\Rep(Q,\alpha)$ is a Lie-Poisson structure on $\Mat_{(k+l)\times (k+l)}(\cx)$,  depending on the choice of $\tr_\lambda$ used to  define $\Psi_\lambda$ in \eqref{Psi}.
\label{butterfly}\end{example}

\section{Quadratic Poisson structures\label{quad}} 

As observed in the previous section, most quivers do not have nontrivial linear Poisson structures, simply because usually there are no cycles of length $3$ in $\cx \ol Q$. On the other hand, there are always cycles of length $4$ and, so, we  turn to quadratic Poisson structures. We shall only consider elements of $\sV^2 Q$ of the form
\begin{equation} \Pi=\sum_{x,y,a,b\in Q} R^{xa^\ast}_{yb^\ast} xa^\ast y b^\ast,\label{quadratic}
\end{equation} 
where $R^{xa^\ast}_{yb^\ast}\in \cx$ and $R^{xa^\ast}_{yb^\ast}=-R^{yb^\ast}_{xa^\ast}$. In other words,  we do not allow monomials of the form $xya^\ast b^\ast$. We shall call such $\Pi$ {\em uniform}.
\par
We compute the superderivatives $D_w(\rho)$ of $\rho=xa^\ast y b^\ast$ using \eqref{D}:
$$ D_x(\rho)=a^\ast y b^\ast,\enskip D_y(\rho)=-b^\ast x a^\ast, \enskip D_{a^\ast}(\rho)=-yb^\ast x,\enskip D_{b^\ast}(\rho)=x a^\ast y,$$
if $x\neq y$ and $a\neq b$, or sums of corresponding terms, if that is not true. All other  $D_w(\rho)$ are zero. Therefore:
$$ [\Pi,\Pi]_{_{\sV}}=4\sum \sum_{z\in Q} \left(R^{pu^\ast}_{qz^\ast}R^{zv^\ast}_{rw^\ast} + R^{rw^\ast}_{pz^\ast}R^{zu^\ast}_{qv^\ast} +   R^{qv^\ast}_{rz^\ast}R^{zw^\ast}_{pu^\ast}\right)pu^\ast qv^\ast r w^\ast,$$
where the first sum is over orbits of the cyclic group $\oZ_3$ on all closed paths $pu^\ast qv^\ast r w^\ast$. Thus, \eqref{quadratic} defines a Poisson structure on $\cx Q$ if and only if the constants $R^{xa^\ast}_{yb^\ast}$ satisfy the equation
\begin{equation} \sum_{z\in Q} R^{pu^\ast}_{qz^\ast}R^{zv^\ast}_{rw^\ast} + R^{rw^\ast}_{pz^\ast}R^{zu^\ast}_{qv^\ast}   +  R^{qv^\ast}_{rz^\ast}R^{zw^\ast}_{pu^\ast}=0 \label{YB}
\end{equation}
for all $p,q,r,u,v,w\in Q$ forming a closed paths. This is an example of the {\em associative Yang-Baxter equation} \cite{Aguiar}. To explain this concept,  consider an element $r$ of $A\otimes A$, where $A$ is an arbitrary associative algebra with $1$. Define $r_{12}$ to be the element $r\otimes 1$ of $A\otimes A\otimes A$, and let $r_{23},r_{31}$ be obtained from $r_{12}$ by cyclic permutation of tensor factors (i.e. $r_{23}=1\otimes r$, etc.) Then $r$ satisfies the associative Yang-Baxter equation (to be abbreviated as AYB) if
\begin{equation} r_{12}r_{23}+r_{31}r_{12}+r_{23}r_{31}=0.\label{YB2}
\end{equation}
Observe that the left-hand side makes sense for non-unital algebras, and, therefore, we can also speak of solutions to AYB for algebras without $1$.
\par
In the case of a quiver $Q$, the  relevant associative algebra
$A_Q$ is generated (over $\cx$) by all (nonzero) paths of length $2$ of the form $xa^\ast$, $x,a\in Q$, with the product
$$ (xa^\ast)\cdot (yb^\ast)= \delta_{ay}\, xb^\ast,$$
$\delta_{ay}$ being the usual Kronecker symbol. Once again, $A_Q$ is a $B$-bimodule, and 
a $\Pi$ of the form \eqref{quadratic} can be viewed as an element $r$ of $A_Q\otimes_{_B} A_Q$:
$$ r=\sum_{x,y,a,b\in Q}R^{xa^\ast}_{yb^\ast} xa^\ast \otimes y b^\ast.$$
It is immediate that \eqref{YB} is equivalent to $r$ being a solution of AYB in $A_Q$. Hence:
\begin{proposition} Uniform quadratic Poisson structures on $\cx Q$ are in 1-1 correspondence with skew-symmetric solutions $r\in A_Q\otimes_{_B} A_Q$ of the associative Yang-Baxter equation in the algebra $A_Q$. \hfill $\Box$
\end{proposition}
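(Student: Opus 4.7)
My plan splits naturally into two stages: (1) verify that the assignment in the statement is a bijection between uniform quadratic elements $\Pi \in \sV^2 Q$ (with skew-symmetric coefficients) and skew-symmetric elements $r \in A_Q \otimes_{_B} A_Q$, and (2) show that under this bijection the condition $[\Pi,\Pi]_{_{\sV}}=0$ is equivalent to the AYB equation \eqref{YB2} for $r$.

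For the bijection, the map $\Pi = \sum R^{xa^\ast}_{yb^\ast}\, xa^\ast y b^\ast \mapsto r = \sum R^{xa^\ast}_{yb^\ast}\, xa^\ast \otimes y b^\ast$ is visibly linear, and the skew-symmetry condition on the coefficients translates verbatim to $r=-\tau(r)$ under the flip $\tau$. One mildly subtle observation is that a nonzero monomial in $\sV^2 Q$ requires the full closed-path condition $h(y)=h(a)$ \emph{and} $h(x)=h(b)$, whereas $\otimes_{_B}$ only enforces the first equality; the second, however, is restored automatically once skew-symmetry is imposed, since it is exactly the $\otimes_{_B}$-nonvanishing condition for the flipped tensor $yb^\ast \otimes xa^\ast$.

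The substantive second stage is a direct computation. The condition $[\Pi,\Pi]_{_{\sV}}=0$ has already been rewritten in the paragraphs preceding the statement as equation \eqref{YB}, one equality per closed path $pu^\ast qv^\ast rw^\ast$. It therefore suffices to expand the three products $r_{12}r_{23}$, $r_{31}r_{12}$, $r_{23}r_{31}$ in $A_Q^{\otimes_{_B} 3}$ using the multiplication rule $(xa^\ast)(yb^\ast)=\delta_{ay}\,xb^\ast$. Each product contracts the inner indices in exactly one tensor slot via a Kronecker delta: I expect the coefficient of $pu^\ast\otimes qv^\ast\otimes rw^\ast$ in $r_{12}r_{23}$ to emerge as $\sum_z R^{pu^\ast}_{qz^\ast}R^{zv^\ast}_{rw^\ast}$, matching the first summand of \eqref{YB}; the products $r_{31}r_{12}$ and $r_{23}r_{31}$ (with multiplication in slots $1$ and $3$ respectively) yield the other two summands after relabelling. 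The only obstacle I foresee is purely notational, namely fixing the convention that in $r_{ij}$ the first tensor factor of $r$ sits in slot $i$ and the second in slot $j$ (so that $r_{31}$ places the second factor of $r$ in slot $1$ and the first in slot $3$), and keeping track of which index becomes the summation variable $z$ in each contraction. No conceptual input beyond the computation of $[\Pi,\Pi]_{_{\sV}}$ already carried out in Section~\ref{quad} is required.
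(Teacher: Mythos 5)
Your proposal is correct and follows essentially the same route as the paper: it relies on the computation of $[\Pi,\Pi]_{_{\sV}}$ already carried out in \S\ref{quad}, which yields \eqref{YB}, and then identifies \eqref{YB} with the coefficientwise expansion of $r_{12}r_{23}+r_{31}r_{12}+r_{23}r_{31}$ in $A_Q\otimes_{_B}A_Q\otimes_{_B}A_Q$ — precisely the step the paper declares ``immediate''. Your extra remarks on the bijection (the closed-path condition versus the $\otimes_{_B}$ condition) and on the $r_{ij}$ conventions only make explicit what the paper leaves implicit.
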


\begin{remark} Observe that $A_Q$ with the commutator is isomorphic to  the Lie algebra ${\rm Der}_1\cx Q$ of linear derivations of $\cx Q$. A skew-symmetric solution to AYB is, in particular, a solution to the classical Yang-Baxter equation:
$$ [r_{12},r_{23}]+[r_{31},r_{12}]+[r_{23},r_{31}]=0,$$
and, therefore, any uniform quadratic Poisson structure on $\cx Q$ arises from a classical triangular $r$-matrix in ${\rm Der}_1\cx Q\otimes_{_B}{\rm Der}_1\cx Q$.
 The condition \eqref{YB2} is, however, stronger than the classical Yang-Baxter equation.
\end{remark} 
\begin{example} Let $Q$ be a generalised Kronecker quiver, consisting of $n$ arrows from $1$ to $2$. Then $A_Q\simeq \Mat_{n\times n}(\cx)$, and $A_Q\otimes_{_B} A_Q\simeq \Mat_{n\times n}(\cx)\otimes_{_\cx} \Mat_{n\times n}(\cx) $.
\par
More generally, if $Q$ is a quiver with $k$ vertices and $n$ arrows such that all arrows end at $1$, then $A_Q\simeq \Mat_{n\times n}(\cx)$, and 
$$A_Q\otimes_{_B} A_Q\simeq \bigoplus_{i,j=1}^k\Mat_{n_i\times n_j}(\cx)\otimes_{_\cx} \Mat_{n_j\times n_i}(\cx), $$ 
where $n_i$ is the number of arrows ending at $i$. Similarly, if all arrows of $Q$ start at $1$, then this last formula holds with $n_i$ denoting the number of arrows ending at $i$.
\end{example}
\begin{example} The previous example shows desirability of having solutions to AYB in $\Mat_{m\times m}(\cx)$. An example of such a solution is given by Aguiar \cite[Example 2.3.3]{Aguiar}:
\begin{equation} r=\sum_{i,j=1}^{m-1}\sum_{k=1}^{\max(i,j)} e_{i,i+j-k+1}\wedge e_{j,k},\label{agu}\end{equation}
where $e_{i,j}$ denote the elementary matrices.\label{rmat}
\end{example}
\begin{example} For $m=2$, the last example gives $r=e_{12}\wedge e_{11}$. For any quiver $Q$ with two arrows $x,y$ ending in vertex $1$, this gives a skewsymmetric element  $xy^\ast\otimes xx^\ast - xx^\ast\otimes xy^\ast $ of $A_Q\otimes A_Q$.  This element belongs to $A\otimes_{_B} A_Q$, if $Q$ is the Kronecker quiver (two arrows from $2$ to $1$), or if $Q$ has one vertex and two loops (i.e. for $\cx Q=\cx\langle x, y\rangle$). In the latter case, the resulting noncommutative Poisson structure $\Pi=xy^\ast xx^\ast - xx^\ast xy^\ast $ has been discussed in detail in \cite[\S 6.3]{PW}.\label{gl2}
\end{example}
\begin{example}
For $m=3$, $r$ and the corresponding $\Pi$ are: 
$$r=e_{12}\wedge e_{11}+e_{13}\wedge e_{21} + e_{23}\wedge e_{11}+e_{23}\wedge e_{22},$$
$$\Pi=[xy^\ast, xx^\ast]+ [xz^\ast,yx^\ast]+[yz^\ast,xx^\ast] +[yz^\ast,yy^\ast].$$
Again, if $Q$ is the generalised Kronecker quiver (with $3$ arrows), or has $3$ loops attached to one vertex, then $\Pi$ is a Poisson structure. In particular, in both cases $\Rep(Q,1)\simeq \cx ^3$, and the Poisson structure $\Psi(\Pi)$ on $\cx^3$ is:
$$ x^2\frac{\partial}{\partial y}\wedge \frac{\partial}{\partial x}+2xy \frac{\partial}{\partial z}\wedge \frac{\partial}{\partial x} +y^2 \frac{\partial}{\partial z}\wedge \frac{\partial}{\partial y}.
$$
This Poisson structure is invariant under diagonal action of $\cx^\ast$ and descends to $\cx P^2=\Rep(Q,1)/\hspace{-1mm}/ \cx^\ast$ (here $Q$ is the Kronecker quiver with $3$ arrows). The corresponding section of $\Lambda^2 T\cx \oP^2 \simeq \sO(3)$ is $-z(x^2)+y(2xy)-x(y^2)$, i.e. $xy^2-x^2 z$.
\label{gl3}
\end{example}

\begin{example} Already the simplest example, discussed in Example \ref{gl2} above, produces an interesting Poisson structure
on representation spaces. Let $\Pi=[xy^\ast, xx^\ast]$ on $\cx Q$, where $Q$ is the Kronecker quiver (by fusing the two vertices, we also obtain a Poisson structure on $\cx\langle x, y\rangle$).  The induced Poisson structure $\pi=\Psi(\Pi)$ on 
$\Rep(Q, (m,n))=\Mat_{n\times m}(\cx)\oplus \Mat_{n\times m}(\cx)$ is
$$ \pi_{(X,Y)}\bigl((A_1,B_1),(A_2,B_2))= \tr\bigl( XB_1 XA_2- XB_2XA_1\bigr),$$
where $(A_i,B_i)$, $i=1,2$, are cotangent vectors.
\par
In particular, if $n=m$ and $X$ is invertible, then this is a generically nondegenerate Poisson structure. The corresponding symplectic form 
is
$$ \omega= \tr d(X^{-1})\wedge dY$$
(the calculation proceeds as in the proof of Proposition \ref{four}).
Therefore $\pi$ is an extension of the Poisson structure induced by this  symplectic form on $GL_n(\cx)\times \gl_n(\cx)$ to $\Mat_{n\times n}(\cx)\oplus \Mat_{n\times n}(\cx)$. \label{neww}
\end{example} 

\section{Contractions of quivers\label{contract}}

We shall now describe  certain operation on quivers, which allows to view an open subset of a moduli space of representations of one quiver as the 
(full) moduli space of another quiver. It is  analogous to considering standard open affine subsets of $\cx P^n$.
\par
\begin{definition} Let $Q$ be a quiver and $a\in Q$ an arrow between two distinct vertices $i$ and $j$. We define a new quiver $Q_a$ by fusing vertices $i$ and $j$, and removing the arrow $a$. We shall refer to $Q_a$ as {\em $Q$ with $a$ contracted}.\label{Qa}
\end{definition}
 Let  $\alpha$ be a dimension vector such that $\alpha_i=\alpha_j$. If $X=(X_b)\in  \Rep(Q,\alpha)$ is such that $X_a$ is invertible, then we can use the action of $GL_{\alpha_j}$ to make $X_a=1$. It follows that
\begin{equation}\left\{X=(X_b)\in  \Rep(Q,\alpha);\: \det X_a\neq 0\right\}/GL_{\alpha_j}(\cx)\simeq  \Rep(Q_a,\alpha^\prime),\label{red}\end{equation}
where $\alpha^\prime=(\alpha_1,\dots, \wh{\alpha_j},\dots,\alpha_k)$. 
\par
In addition, let  $\chi:G\to \cx^\ast$ a character such that $\chi|_{GL_{\alpha_i}(\cx)}=\chi|_{GL_{\alpha_j}(\cx)}$, and define the character $\chi^\prime$ on $G^\prime=\prod_{s\neq j} GL_{\alpha_s}$ as the restriction of $\chi$ on $\prod_{s\neq i,j} GL_{\alpha_s}$, and $\chi^\prime=1$ on $ GL_{\alpha_i}$. Then $\chi$-semistable representations of $Q$ with invertible $X_a$ correspond to $\chi^\prime$-semistable representations of $Q_a$, and 
the corresponding GIT quotients are isomorphic.
\par
Suppose now that we are given a Poisson structure $\Pi$ on $\cx Q$. It induces a $G$-invariant Poisson structure $\pi$ on $\Rep(Q,\alpha)$ and so, owing to \eqref{red}, a $G^\prime$-invariant Poisson structure $\pi_a$ on $\Rep(Q_a,\alpha^\prime)$. We aim to describe this induced Poisson structure directly in terms of the quiver $Q_a$.\\ Let $U_a=\left\{X=(X_a)\in  \Rep(Q,\alpha);\: X_a\in GL_{\alpha_j}(\cx)\right\}$ and $H=GL_{\alpha_j}$. Since $H$ acts freely and properly on $U_a$, the Poisson structure on $U_a/H$ is obtained by restricting the Poisson structure $\pi$ on $U_a$ to $X_a=1$ and to covectors which annihilate the tangent vectors generated by the action of $H$. Let $S_j=\{b\in Q; h(b)=j\}$ and $T_j=\{b\in Q; t(b)=j\}$. The vector field $\check{\rho}$, generated by $\rho\in \Lie(H)$, equals, at $X=(X_b)$, to 
$$\sum_{b\in S_j} \rho X_b- \sum_{b\in T_j}  X_b \rho,
$$
and it follows that  a covector $Y=(Y_b)$ annihilates all such vectors if and only if 
\begin{equation}  \sum_{b\in S_j} X_bY_b- \sum_{b\in T_j}  Y_b X_b =0.\label{moment}\end{equation}
The arrow $a$ belongs to $S_j$, and we conclude that at points of $\Rep(Q,\alpha)$, where $X_a=1$, we need to evaluate $\pi$ on covectors which satisfy
\begin{equation*} Y_a=\sum_{b\in T_j}  Y_b X_b\:- \sum_{b\in S_j,\,b\neq a} X_bY_b.
\end{equation*}
Therefore $\pi_a=\Psi(\Pi_a)$, where $\Pi_a\in \sV^2 Q_a$ is obtained from $\Pi\in \sV^2 Q$ by replacing $a$ with $1$ and $a^\ast$ with
\begin{equation*} \sum_{b\in T_j} b^\ast b\:- \sum_{b\in S_j,\,b\neq a} bb^\ast.
\end{equation*}
\begin{proposition} Let $Q$ be a quiver, $a\in Q$ with $h(a)\neq t(a)$, and let $Q_a$ be $Q$ with $a$ contracted. If $\Pi$ is a Poisson structure on $\cx Q$, then $\Pi_a$ is Poisson structure on $\cx Q_a$. \label{lll}
\end{proposition}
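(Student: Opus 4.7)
The plan is to deduce this from Theorem~\ref{converse} combined with a Poisson reduction argument that has, in effect, already been set up in the paragraph preceding the proposition. By Theorem~\ref{converse}, it suffices to show that for every dimension vector $\alpha'$ of $Q_a$, the bivector $\Psi(\Pi_a)$ is a genuine Poisson structure on $\Rep(Q_a,\alpha')$.

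Fix such an $\alpha' = (\alpha_1,\dots,\wh{\alpha_j},\dots,\alpha_k)$, and build from it a dimension vector $\alpha$ of $Q$ by inserting $\alpha_j := \alpha_i$ in the appropriate slot. By the corollary to Theorem~\ref{commute}, $\pi = \Psi(\Pi)$ is a $G$-invariant Poisson structure on $\Rep(Q,\alpha)$. The open subset $U_a \subset \Rep(Q,\alpha)$ where $X_a$ is invertible is $G$-invariant, the subgroup $H = GL_{\alpha_j}(\cx)$ acts on it freely and properly, and $U_a/H \simeq \Rep(Q_a,\alpha')$ via the slice $X_a = 1$ as in \eqref{red}. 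Standard Poisson reduction for free $H$-actions on a Poisson manifold then produces a well-defined Poisson structure $\pi_a$ on the quotient, characterised pointwise as follows: for cotangent vectors $Y',Y''$ on $\Rep(Q_a,\alpha')$, one chooses arbitrary lifts to $U_a$ at $X_a=1$ that annihilate the infinitesimal $H$-action (i.e.\ satisfy \eqref{moment}), and evaluates $\pi$ on them.

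Next I use the explicit computation done in the text just before the proposition: the annihilator condition \eqref{moment} is solved by taking
\[ Y_a = \sum_{b\in T_j} Y_b X_b - \sum_{b\in S_j,\,b\neq a} X_b Y_b, \]
and substituting this expression (together with $X_a = 1$) into the formula \eqref{induce} for $\Psi(\Pi)$ produces exactly the noncommutative substitution $a\mapsto 1$, $a^\ast\mapsto \sum_{b\in T_j}b^\ast b - \sum_{b\in S_j,\,b\neq a} bb^\ast$ that defines $\Pi_a$. Hence $\pi_a = \Psi(\Pi_a)$ as bivector fields on $\Rep(Q_a,\alpha')$. Since Poisson reduction yields a Poisson bracket, $\Psi(\Pi_a)$ is Poisson on each $\Rep(Q_a,\alpha')$, and Theorem~\ref{converse} gives $[\Pi_a,\Pi_a]_{_{\sV}}=0$.

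The only subtle point is justifying the Poisson reduction, and specifically that the descent formula really is what the preceding paragraph computes. Concretely, one must check that a cotangent vector at $(X_b)_{b\neq a}$ on the quotient pulls back to a covector on $U_a$ along the slice precisely when condition \eqref{moment} holds, and that evaluating $\pi$ on any such lift gives a result independent of the choice, because different lifts differ by vertical covectors and $\pi$ is $H$-invariant (so vertical covectors lie in the kernel of $\pi$ restricted to the invariant functions). Both points are direct consequences of $G$-invariance of $\pi$ together with the explicit formula for $\check\rho$ supplied in the text, so no additional work beyond what is already on the page is needed.
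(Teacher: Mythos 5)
Your proposal is correct and follows essentially the same route as the paper: lift each dimension vector of $Q_a$ to one of $Q$ with $\alpha_i=\alpha_j$, use the reduction computation preceding the proposition to identify the quotient Poisson structure on $U_a/GL_{\alpha_j}\simeq\Rep(Q_a,\alpha')$ with $\Psi(\Pi_a)$, and conclude via Theorem~\ref{converse}. The extra care you take in justifying the Poisson reduction (independence of the lift, vertical covectors) is a fuller spelling-out of what the paper leaves implicit, not a different argument.
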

\begin{proof} For any dimension vector $\beta$, $\Rep(Q_a,\beta)$ arises as $U_a/GL_{\alpha_j}$ for some $\Rep(Q,\alpha)$. The above argument shows that
$\Psi(\Pi_a)$ is an invariant Poisson structure on each $\Rep(Q_a,\beta)$. The statement follows now from Theorem \ref{converse}.
\end{proof}
\begin{remark} One has a completely analogous operation, using $GL_{\alpha_i}(\cx)$ instead of $GL_{\alpha_j}(\cx)$. Again, we obtain a Poisson structure $\Pi^\prime_a$ on $\cx Q_a$ from  a $\Pi$, by setting $a=1$ and 
\begin{equation*} a^\ast=  \sum_{b\in S_i,\,b\neq a} bb^\ast \: -\sum_{b\in T_i} b^\ast b.
\end{equation*}
\end{remark}

\begin{example} Consider again the Poisson bracket $\Pi=[xy^\ast, xx^\ast]$ on the Kronecker quiver (cf. Examples \ref{gl2} and \ref{neww}). Contracting the arrow $x$ gives the bracket $\Pi_x=yy^\ast y^\ast$ on $Q_x$, which has one vertex and one loop. The corresponding Poisson structure on $\Rep(Q_a,k)$ is the standard Lie-Poisson structure on $\Mat_{k\times k}(\cx)$.
On the other hand, contracting $y$ results in the cubic bracket $[xx^\ast, x^2x^\ast]$ on $Q_y=Q_x$. The induced Poisson structure on $\Mat_{k\times k}(\cx)$ is the extension of the symplectic form $\tr X^{-1} d(X^{-1})\wedge d(X^{-1})$ on invertible matrices.\end{example}

\begin{example} Let $Q$ be the Kronecker quiver with $3$ arrows and let $\Pi$ be the quadratic Poisson structure described in Example \ref{gl3}. We contract $Q$ at any of the arrows to obtain a quiver  consisting  of one vertex and two loops.
The corresponding Poisson structures on the free algebra on two letters are:
\begin{equation*}
\Pi_x=- [y^\ast, yy^\ast+zz^\ast]-[z^\ast, y^2y^\ast +yzz^\ast]-[yz^\ast, zz^\ast].
\end{equation*}
\begin{equation*}
\Pi_y=-[x^2x^\ast+xzz^\ast, xx^\ast]+[z^\ast, x^\ast x-zz^\ast].
\end{equation*}
\begin{equation*}
\Pi_z=[xy^\ast,xx^\ast]-[x^2x^\ast+xyy^\ast, yx^\ast]-[yxx^\ast+ y^2y^\ast,xx^\ast + yy^\ast].
\end{equation*}
\label{exe}
\end{example}

Using Proposition \ref{homog}, we obtain:
\begin{corollary} The free algebra $\cx\langle x,y\rangle$ admits the following pairs of compatible Poisson brackets:
$$ [xx^\ast, x^\ast]+[yy^\ast, x^\ast] \quad\text{\rm and}\quad [yy^\ast,xy^\ast]-[y^\ast,x^2x^\ast+xy y^\ast],$$
$$ [xx^\ast, x^\ast]+[yx^\ast, y^\ast] \quad\text{\rm  and}\quad [yy^\ast, yxx^\ast+y^2y^\ast],$$
$$ [xy^\ast,xx^\ast] \quad\text{\rm  and}\quad [x^2x^\ast+xyy^\ast, yx^\ast]+[yxx^\ast+ y^2y^\ast,xx^\ast + yy^\ast].$$
\label{reduced}\end{corollary}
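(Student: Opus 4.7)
The plan is a direct application of Proposition \ref{homog} to the three Poisson structures $\Pi_x, \Pi_y, \Pi_z$ produced in Example \ref{exe}. By Proposition \ref{lll}, each $\Pi_a$ ($a\in\{x,y,z\}$) is a Poisson structure on $\cx Q_a$. Since $Q$ is the $3$-Kronecker quiver, each contracted quiver $Q_a$ has a single vertex and two loops, so $\cx Q_a$ is the free algebra on two letters; after relabeling the two remaining arrows of $Q_a$ as $x$ and $y$, every $\Pi_a$ yields a Poisson structure on $\cx\langle x,y\rangle$.

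Next, I would split each $\Pi_a$ into homogeneous pieces. A glance at the closed paths written in Example \ref{exe} shows that the summands of $\Pi_x$ have lengths $3$ and $4$, those of $\Pi_y$ have lengths $3$ and $5$, and those of $\Pi_z$ have lengths $4$ and $5$; recall that a bivector of the form $Pa^\ast R b^\ast$ has degree (in the sense of Section \ref{bloody}) equal to its total length minus $2$. Hence in every case $\Pi_a=\Pi_a'+\Pi_a''$ with $\Pi_a'$ and $\Pi_a''$ homogeneous of \emph{distinct} degrees. Proposition \ref{homog} then immediately gives that, for each $a$, the summands $\Pi_a'$ and $\Pi_a''$ are themselves Poisson structures on $\cx\langle x,y\rangle$ and that they are compatible, producing three pairs of compatible Poisson brackets.

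The final step is to match these three pairs with the three displays in the statement. This is a purely notational task: one carries out the relabeling of the loops of $Q_a$ as $x,y$, absorbs overall signs (which do not affect the Poisson or compatibility conditions), and uses the graded cyclic relations \eqref{pq}, in particular the skew-symmetry $AB=-BA$ for $A,B\in\sV^1 Q$, to put each homogeneous component into the form listed. I expect this matching to be the only step requiring a little care, but no genuine obstacle arises: the degrees already pin down which homogeneous component of which $\Pi_a$ corresponds to which bracket in the corollary, and the remaining identification is a short manipulation in $\sV Q$.
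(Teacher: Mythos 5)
Your proposal is correct and is essentially the paper's own argument: the corollary is obtained by applying Proposition \ref{homog} to the contracted Poisson structures $\Pi_x,\Pi_y,\Pi_z$ of Example \ref{exe} (which are Poisson by Proposition \ref{lll}), whose summands are homogeneous of distinct degrees, the rest being the relabeling/sign bookkeeping you describe.
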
 

\begin{remark} The compatibility of these pairs follows also from the fact that the ones of the higher degree are infinitesimal deformations of the ones with the lower degree: see Section \ref{leaves}.\end{remark}

\subsection{A generalisation} We now generalise the procedure described above to the case of several arrows meeting at a vertex. 
\begin{definition} Let $Q$ be a quiver and $a_1,\dots,a_s\in Q$ arrows such that $h(a_m)=j$, $m=1,\dots,s$, $t(a_m)=v_m$ with $v_m\neq j$ and $v_m\neq v_{m^\prime}$, $m,m^\prime=1,\dots,s$. We define a new quiver $Q^\prime$ with vertices $1,\dots, \wh{j},\dots,k$ and arrows defined as follows:
\begin{itemize} 
\item[(i)] if $a\in Q$ and $h(a)\neq j$, $t(a)\neq j$, then $a$ is unchanged in $Q^\prime$;
\item[(ii)] if $a$ is one of the $a_m$, then $a$ is removed;
\item[(iii)] if $t(a)=j$ and $h(a)\neq j$, then $a$ is replaced by $s$ arrows, starting at $v_m$, $m=1,\dots,s$, and ending in $h(a)$; 
\item[(iv)] if $h(a)=j$ and $t(a)\neq j$, then $a$ is replaced by $s$ arrows, starting at $t(a)$ and ending in $v_m$, $m=1,\dots,s$; 
\item[(v)] if $t(a)=h(a)=j$, then $a$ is replaced by a loop at each $v_m$ and two arrows in opposite directions between each pair of distinct $v_m, v_{m^\prime}$, $m,m^\prime=1,\dots,s$.
\end{itemize}\label{long}
\end{definition}
Essentially, every arrow beginning or ending in $j$ is decomposed into arrows beginning or ending in the $v_m$, $m=1,\dots,s$.
\begin{example} Let $Q$ be a quiver with $3$ vertices,  two arrows from $1$ to $3$, and two arrows from $2$ to $3$. If we take $a_1$ to be an arrow from $1$ to $3$ and $a_2$ an arrow from $2$ to $3$, then the above procedure produces the quiver 
\begin{equation*}
\begin{texdraw}
\drawdim mm
\arrowheadsize l:1 w:1
\arrowheadtype t:H

\lcir r:2
\rmove (-2 0)
\ravec (0 -0.5)
\rmove (4 0.5)
\fcir f:0 r:0.7
\rmove (5.2 0)
\lellip rx:5 ry:1
\rmove (0 1)
\ravec (-1 0)
\rmove (1 -2)
\ravec (1 0)
\rmove (4.4 1)
\fcir f:0 r:0.7

\rmove (2 0)
\lcir r:2
\rmove (2 0)
\ravec (0 -0.5)
\rmove (4 0.5)

\end{texdraw}.
\end{equation*}
This quiver and its significance have been already mentioned in Example \ref{butterfly}.
\end{example}

\medskip

Let now $\alpha$ be a dimension vector such that $\alpha_j=\sum_{m=1}^s \alpha_{v_m}$, and let $X=(X_a)\in  \Rep(Q,\alpha)$. We define a $\alpha_j\times \alpha_j$-matrix $X^\prime$ by placing the $\alpha_j\times \alpha_{v_m}$ matrices $X_{a_m}$ side by side. We define $U^\prime$  to be the set
\begin{equation*}U^\prime=\left\{X=(X_a)\in  \Rep(Q,\alpha);\: \det X^\prime\neq 0 \right\}.\label{red2}\end{equation*}
Once again, we can use the $GL_{\alpha_j}(\cx)$-action to make $X^\prime$ equal to the identity matrix. It follows easily that we have the following isomorphism:
$$ U^\prime /GL_{\alpha_j}(\cx)\simeq  \Rep(Q^\prime,\alpha^\prime),$$
where $\alpha^\prime=(\alpha_1,\dots, \wh{\alpha_j},\dots,\alpha_k)$. This isomorphism is $G^\prime$-equivariant, where $G^\prime=\prod_{i\neq j} GL_{\alpha_i}$.

Moreover, once again, if  $\chi:G\to \cx^\ast$ is a character such that $$\chi|_{GL_{\alpha_j}(\cx)}=\prod_{m=1}^s\chi|_{GL_{\alpha_{v_m}}(\cx)},$$
then the $\chi$-semistable representations of $Q$ with invertible $X^\prime$ correspond to $\chi^\prime$-semistable representations of $Q^\prime$, where  $\chi^\prime:G^\prime \to \cx^\ast$  is the restriction of $\chi$ on $\prod_{i\neq j,v_1,\dots,v_s} GL_{\alpha_i}$, and $\chi^\prime=1$ on each $ GL_{\alpha_{v_m}}$. Thus, 
the corresponding GIT quotients are again isomorphic.
\par
Let $\Pi$ be a Poisson structure on $\cx Q$ and $\pi=\Psi(\Pi)$ the induced Poisson structure on $\Rep(Q,\alpha)$.Once again, we obtain a Poisson structure on $\Rep(Q^\prime,\alpha^\prime),$ since the latter is $ U^\prime /GL_{\alpha_j}(\cx)$. And again this Poisson structure is induced by a Poisson structure $\Pi^\prime$ on $\cx Q^\prime$. We can find $\Pi^\prime$ easily enough by following the procedure in the previous section. Let $S_j=\{b\in Q; h(b)=j\}$ and $T_j=\{b\in Q; t(b)=j\}$. When passing to $Q^\prime$, each arrow $b$ in $S_j$ or $T_j$ has been decomposed into either $s$ or $s^2$ (if $b$ is a loop) arrows $c_{b,i}$. We can decompose similarly the dual arrows $b^\ast$.
The condition for a covector to annihilate generators of the $GL_{\alpha_j}$-action is still \eqref{moment}. On the set where $X^\prime=1$, this corresponds to the following equation on arrows
$$ \sum_{m=1}^s a_m^\ast= \sum_{b\in T_j} \sum_{i,l} c_{b,i}^\ast c_{b,l}\:- \sum_{b\in S_j,\,b\neq a_m} \sum_{i,l} c_{b,i}  c_{b,l}^\ast.$$
Thus, as the $a_m$ begin at a different vertices,
\begin{equation} a_m^\ast= e_{v_m}\sum_{b\in T_j} \sum_{i,l} c_{b,i}^\ast c_{b,l}\:- e_{v_m}\sum_{b\in S_j,\,b\neq a_m} \sum_{i,l} c_{b,i}  c_{b,l}^\ast,
\label{oof}\end{equation}
$m=1,\dots,s$, where $e_{v_{m}}$ is the idempotent at vertex $v_m$. The element $\Pi^\prime\in \sV^2 Q^\prime$, which induces the Poisson structure on each $ U^\prime /GL_{\alpha_j}(\cx)$ is, therefore, obtained from $\Pi$ by substituting $e_{v_m}$ for each $a_m$ and \eqref{oof} for each $a_m^\ast$.
\par
The same argument as in the proof of Proposition \ref{lll} shows:
\begin{proposition}
If $\Pi$ is a Poisson structure on $\cx Q$, then $\Pi^\prime $ is Poisson structure on $\cx Q^\prime$.\hfill $\Box$ \label{llr}
\end{proposition}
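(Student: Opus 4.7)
The plan is to follow the pattern of the proof of Proposition \ref{lll} verbatim: realise every representation space of $Q^\prime$ as a Poisson quotient of an appropriate representation space of $Q$, verify that the descended Poisson structure is $\Psi(\Pi^\prime)$, and then invoke Theorem \ref{converse} to conclude that $[\Pi^\prime,\Pi^\prime]_{_{\sV}}=0$.

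More concretely, I would fix an arbitrary dimension vector $\beta=(\beta_i)_{i\neq j}$ for $Q^\prime$ and build the dimension vector $\alpha$ for $Q$ by setting $\alpha_i=\beta_i$ for every $i\neq j$ and $\alpha_j=\sum_{m=1}^s\beta_{v_m}$. With this choice, the isomorphism $U^\prime/GL_{\alpha_j}(\cx)\simeq \Rep(Q^\prime,\beta)$ established just above the proposition is available. Since $\Pi$ is a Poisson structure on $\cx Q$, Theorem \ref{commute} gives that $\Psi(\Pi)$ is a $G$-invariant Poisson structure on $\Rep(Q,\alpha)$. Restriction to the $G$-invariant open subset $U^\prime$ yields a Poisson structure on $U^\prime$, and because $GL_{\alpha_j}(\cx)$ acts freely and properly on $U^\prime$ (we are on the locus where $X^\prime$ is invertible), Poisson reduction produces a Poisson structure on the quotient $U^\prime/GL_{\alpha_j}(\cx)\simeq \Rep(Q^\prime,\beta)$.

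The content of the computation preceding the proposition is precisely that this reduced Poisson structure coincides with $\Psi(\Pi^\prime)$: setting $X^\prime=1$ corresponds to substituting $e_{v_m}$ for each $a_m$, and the constraint \eqref{moment} for a covector to annihilate the infinitesimal $GL_{\alpha_j}$-action is solved by the substitution \eqref{oof} for each $a_m^\ast$. Thus $\Psi(\Pi^\prime)$ is a $G^\prime$-invariant Poisson structure on $\Rep(Q^\prime,\beta)$ for every dimension vector $\beta$, since every $\beta$ arises from the above construction. Theorem \ref{converse}, applied to $Q^\prime$ and $\Pi^\prime\in \sV^2 Q^\prime$, now forces $[\Pi^\prime,\Pi^\prime]_{_{\sV}}=0$.

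The only point requiring care, and the place I would expect to spend the most effort, is the identification of the reduced bracket with $\Psi(\Pi^\prime)$ on all of $\Rep(Q^\prime,\beta)$, not merely at the distinguished slice where $X^\prime=1$. This is however automatic: both brackets are $G^\prime$-invariant and polynomial, they agree on a single $GL_{\alpha_j}(\cx)$-slice of $U^\prime$, and the quotient map $U^\prime\to U^\prime/GL_{\alpha_j}(\cx)$ is surjective, so they agree globally. Everything else is a direct transcription of the argument for Proposition \ref{lll}, with the single arrow $a$ replaced by the family $a_1,\dots,a_s$ and $X_a$ replaced by the composite matrix $X^\prime$.
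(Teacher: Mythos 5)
Your proposal is correct and follows exactly the paper's own route: the paper proves Proposition \ref{llr} by repeating the argument of Proposition \ref{lll}, realising each $\Rep(Q^\prime,\beta)$ as $U^\prime/GL_{\alpha_j}(\cx)$, identifying the reduced bracket with $\Psi(\Pi^\prime)$ via the computation preceding the statement, and concluding with Theorem \ref{converse}. Your extra remark about extending the slice identification by $G^\prime$-invariance is a reasonable point of care but does not alter the argument.
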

\begin{remark} We can generalise further by introducing coefficients in the matrix $X^\prime$ (i.e. each block $X_{a_m}$ is multiplied by some $\epsilon_m\in \cx$).
\end{remark}


\section{Symplectic leaves for the Kronecker quiver with $3$ quivers \label{leaves}}

Generalised Kronecker quivers with $2$ vertices and $n$ arrows between those are of particular interest, given that they are natural objects in the construction of moduli spaces of coherent sheaves on projective schemes, as discovered by \'Alvarez-C\'onsul and King \cite{ACK}. The Kronecker quiver $Q_3$ with $3$ arrows is related even closer to moduli spaces of sheaves: owing to the results of Beauville \cite{Beau}, the moduli space of $\chi$-semistable representations of $Q_3$, with dimension vector $(k,k)$, and $\chi:GL_k(\cx)\times GL_k(\cx)\to \cx^\ast$, given by $\chi(g_1,g_2)=\det g_1g_2^{-1}$,  is isomorphic to the moduli space of acyclic $1$-dimensional sheaves on $\oP^2$ with Hilbert polynomial $P(u)=ku$. The isomorphism is provided by the following resolution of a sheaf $\sF$:
\begin{equation*}
 0\to \sO(-2)^k\stackrel{M}{\rightarrow}\sO(-1)^k\to \sF\to 0,
\end{equation*}
where $M=A\zeta_1+B\zeta_2+C\zeta_3$ is a matrix-valued linear polynomial, identified with the representation $(A,B,C)$ of $Q_3$.
\par
As observed in Example \ref{gl3}, there is a (family of) natural noncommutative Poisson structures on $\cx Q_3$, arising from the solution \eqref{agu} to the AYB equation, found by Aguiar. We want to study the induced Poisson structure on $\Rep(Q_3,(k,k))/\hspace{-1mm}/_\chi G$ in greater detail.
\par
First of all, as observed in the previous section, the Poisson structures on the open subsets, where one of the $A,B,C$ is invertible, are induced from noncommutative Poisson structures (described in Example \ref{exe}) on the free algebra $\cx\langle x,y\rangle$ on two letters. It is therefore enough to study these Poisson brackets. We shall concentrate on the second one, i.e. on
\begin{equation}
\Pi=-[x^2x^\ast+xyy^\ast, xx^\ast]+[y^\ast, x^\ast x-yy^\ast].\label{Pi_y}
\end{equation}
The corresponding open subset of $\Rep(Q_3,(k,k))$ is defined by $B$ being invertible, and
the induced Poisson bracket $\Psi(\Pi)$ on $\Mat_{k\times k}(\cx) \oplus \Mat_{k\times k}(\cx)$ is obtained from the one on $\Rep(Q_3,(k,k))$ by setting $X=AB^{-1}$, $Y=CB^{-1}$.
The corresponding open subset of $\Rep(Q_3,(k,k))/\hspace{-1mm}/_\chi G$ consists of sheaves $\sF$ such that $[0,1,0]\not\in \supp \sF$. 
\par
 We have already observed that the two homogenous terms in \eqref{Pi_y} define compatible Poisson brackets on $\cx\langle x,y\rangle$. Let us write
$$ \Pi_0=[xx^\ast,x]+[yx^\ast,y^\ast],\quad \Pi_{\infty}=[yy^\ast,yxx^\ast+y^2y^\ast].$$
We shall now consider the $1$-dimensional family of Poisson brackets on $\cx\langle x,y\rangle$, given by
$$\Pi_\epsilon=\Pi_0+\epsilon \Pi_\infty.$$
It is easy to see that a generic noncommutative Poisson bracket on $\cx\langle x,y\rangle$ induces a Poisson structure on $\Mat_{k\times k}(\cx) \oplus \Mat_{k\times k}(\cx)$, which has a Zariski open symplectic leaf. For $\Pi_0$ and $\Pi_\infty$ we can describe these leaves and the corresponding symplectic forms as follows:
\begin{lemma} The noncommutative Poisson brackets $\Pi_0$ and $\Pi_\infty$ on $\cx\langle x,y\rangle$, described in Corollary \ref{reduced}, induce generically nondegenerate Poisson structures on $\Mat_{k\times k}(\cx) \oplus \Mat_{k\times k}(\cx)$. The corresponding symplectic forms  are:
\begin{itemize}
\item[(i)] $\tr dY\wedge d(XY^{-1})\,$ for $\: [xx^\ast, x^\ast]+[yx^\ast, y^\ast]$;
\item[(ii)] $\tr d(YX)^{-1}\wedge dX\,$ for $\:[yy^\ast, yxx^\ast+y^2y^\ast]$.
\end{itemize}\label{four}
\end{lemma}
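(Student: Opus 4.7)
The plan is to find, on the relevant Zariski open subset, a change of coordinates that turns the induced Poisson bivector into a constant (Darboux) one, and then to read the claimed symplectic form off as its inverse.

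For (i), a direct application of formula \eqref{induce} to $\Pi_0=[xx^\ast,x^\ast]+[yx^\ast,y^\ast]$ gives (up to the overall scalar coming from $\tr_\lambda$)
\begin{equation*}
\{x_{ij},x_{kl}\}=\delta_{il}x_{kj}-\delta_{kj}x_{il},\quad \{x_{ij},y_{kl}\}=\delta_{il}y_{kj},\quad \{y_{ij},y_{kl}\}=0.
\end{equation*}
On the open set $\{Y\in GL_k(\cx)\}$ I set $Z=XY^{-1}$, so that $x_{ij}=\sum_m z_{im}y_{mj}$. Expanding via the Leibniz rule, the brackets above collapse to $\{z_{ij},z_{kl}\}=0$, $\{z_{ij},y_{kl}\}=\delta_{il}\delta_{jk}$, and $\{y_{ij},y_{kl}\}=0$; that is, $(Z,Y)$ is a global Darboux chart. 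The dual symplectic form is $\tr dY\wedge dZ=\tr dY\wedge d(XY^{-1})$, which is (i).

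For (ii) the same strategy applies to $\Pi_\infty=[yy^\ast,yxx^\ast+y^2y^\ast]$; formula \eqref{induce} now gives
\begin{equation*}
\{x_{ij},x_{kl}\}=0,\quad \{y_{ij},x_{kl}\}=y_{kj}(yx)_{il},\quad \{y_{ij},y_{kl}\}=y_{kj}(y^2)_{il}-(y^2)_{kj}y_{il}.
\end{equation*}
On the open set where $X$ and $YX$ are both invertible I set $W=(YX)^{-1}$; then $Y=W^{-1}X^{-1}$ and $(X,W)$ are coordinates. The chain rule gives $\partial w_{ij}/\partial x_{ab}=-(X^{-1})_{ia}W_{bj}$ and $\partial w_{ij}/\partial y_{ab}=-W_{ia}(Y^{-1})_{bj}$. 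Combining these with the brackets above and the identities $WY=X^{-1}$ and $WYX=I$, a short matrix calculation gives $\{w_{ij},x_{kl}\}=-\delta_{il}\delta_{jk}$. The computation of $\{w_{ij},w_{kl}\}$ splits into three double sums, one from each of the brackets $\{x,y\}$, $\{y,x\}$, and $\{y,y\}$; using $WY=X^{-1}$, $XW=Y^{-1}$, and $WYX=I$, each sum collapses to a combination of the two monomials $W_{il}(X^{-1})_{kj}$ and $(X^{-1})_{il}W_{kj}$, and the three combinations cancel. Hence $(X,W)$ is Darboux and the symplectic form is $\tr dW\wedge dX=\tr d(YX)^{-1}\wedge dX$, which is (ii).

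The main obstacle is the cancellation $\{w_{ij},w_{kl}\}=0$ in (ii): three separately nonzero contributions must combine to zero, and seeing this requires invoking the relations $WY=X^{-1}$ and $XW=Y^{-1}$ at the right moments. The cancellation is of course forced in the end by $\Pi_\infty$ being a Poisson structure on $\cx\langle x,y\rangle$ (so that $\Psi(\Pi_\infty)$ satisfies Jacobi by Theorem \ref{commute}), but working the computation out explicitly is what yields the formula for the symplectic form.
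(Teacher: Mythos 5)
Your argument is correct, and the intermediate formulas check out: the coordinate brackets you read off from \eqref{induce} agree with the maps $(A,B)\mapsto(XA-AX-BY,\,YA)$ and $(A,B)\mapsto(YBYX,\,-YXAY+YBY^{2}-Y^{2}BY)$ that encode $\Psi(\Pi_0)$ and $\Psi(\Pi_\infty)$, and the canonical relations $\{z_{ij},y_{kl}\}=\delta_{il}\delta_{jk}$, $\{z,z\}=0$ and $\{w_{ij},x_{kl}\}=-\delta_{il}\delta_{jk}$, $\{w,w\}=0$ do hold; in particular the three-term cancellation in $\{w_{ij},w_{kl}\}$ works exactly as you describe, the $\{x,y\}$, $\{y,x\}$ and $\{y,y\}$ sums contributing $-(X^{-1})_{il}W_{kj}$, $+W_{il}(X^{-1})_{kj}$ and their difference. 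Your route, however, differs from the paper's. There, $\Psi(\Pi_0)$ and $\Psi(\Pi_\infty)$ are written as pointwise linear maps sending a covector $(A,B)$ to a tangent vector on $\Mat_{k\times k}(\cx)\oplus\Mat_{k\times k}(\cx)$, nondegeneracy is observed on the locus where $Y$ (resp.\ $X$ and $Y$) is invertible, and the symplectic form is found by solving for $(A,B)$ in terms of $dX,dY$ and simplifying $\tr(A\,dX+B\,dY)$. You instead exhibit explicit conjugate variables $(XY^{-1},Y)$ and $(X,(YX)^{-1})$ in which the bracket becomes constant; this buys manifest nondegeneracy and closedness and makes the identifications $\omega=\tr dY\wedge d(XY^{-1})$ and $\omega=\tr d(YX)^{-1}\wedge dX$ immediate (it also explains a posteriori why the answers have the Darboux shape), at the cost of having to guess the right coordinates, whereas the paper's direct pointwise inversion is mechanical and is reused unchanged for the other brackets of Corollary \ref{reduced} in the remark that follows. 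Both treatments, like the statement of the lemma itself, ignore the overall nonzero scalar coming from the choice of $\tr_\lambda$ in \eqref{induce}, which you rightly flag.
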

\begin{proof}  We begin with (i). The Poisson structure on $V=\Mat_{k\times k}(\cx) \oplus \Mat_{k\times k}(\cx)$ is given by
$$ T^\ast_{(X,Y)} V\ni (A,B)\longmapsto \left(XA-AX-BY,\, YA\right)\in  T_{(X,Y)} V.$$
It is clearly nondegenerate if $Y$ is invertible. We can then write $A=Y^{-1}dY$, and $B=XY^{-1}dYY^{-1}-Y^{-1}dYXY^{-1}-dXY^{-1}$. The symplectic form is now $\tr(AdX+BdY)$, i.e.
\begin{multline*}
 \tr\left(Y^{-1}dYdX+ XY^{-1}dYY^{-1}dY-Y^{-1}dYXY^{-1}dY-dXY^{-1}dY\right)=\\
\tr\left(Y^{-1}dY\wedge dX-Y^{-1}dY\wedge XY^{-1}dY\right)=\tr\left(dY\wedge (dX-XY^{-1}dY)Y^{-1}\right)=\\
\hspace*{3mm}  \tr dY\wedge d(XY^{-1}).\hfill
\end{multline*}
Similarly, for (ii), the Poisson structure viewed as map $T^\ast V\to TV$ is is given by:
$$ (A,B)\mapsto \left(YBYX,\, -YXAY+YBY^2-Y^2BY\right).$$
This is invertible if both $X$ and $Y$ are, and then:
$$ B=Y^{-1}dX X^{-1}Y^{-1},\enskip A=-X^{-1}Y^{-1}dYY^{-1}+X^{-1}Y^{-1}dXX^{-1}-X^{-1}dXX^{-1}Y^{-1}.$$
Hence the symplectic form $\tr(AdX+BdY)$ is
\begin{multline*}
 \tr\left( -X^{-1}Y^{-1}dYY^{-1}dX+X^{-1}Y^{-1}dXX^{-1}dX-X^{-1}dXX^{-1}Y^{-1}dX\right. +\\ \left. Y^{-1}dX X^{-1}Y^{-1}dY\right)=
\tr\left(-X^{-1}d(Y^{-1})\wedge dX-d(X^{-1})Y^{-1}\wedge dX\right)=\\\hspace*{3mm} \tr d(X^{-1}Y^{-1})\wedge dX.\hfill
\end{multline*}
\end{proof}
\begin{remark} The symplectic forms corresponding to the other Poisson structures in Corollary \ref{reduced} can be computed  similarly. For example, 
 $[xx^\ast, x^\ast]+[yy^\ast, x^\ast]$ gives $\tr d(Y^{-1}X)\wedge dY\,$, and $\:[yy^\ast,xy^\ast]-[y^\ast,x^2x^\ast+xy y^\ast]$ gives $\tr d(X^{-1})\wedge d(X^{-1}YX)\,$.
\end{remark}
\begin{remark}
 We would like to argue that the Poisson structure, induced on $\Mat_{k\times k}(\cx) \oplus \Mat_{k\times k}(\cx)$ by $[xx^\ast, x^\ast]+[yx^\ast, y^\ast]$, is related to the standard Poisson structure, given by $\tr dX\wedge dY$, in a way, which is similar to the relation between the trigonometric and rational Calogero-Moser systems. In particular, the functions $H_i=\tr X^i$, $i=1,\dots,k$, Poisson commute. Moreover, the symplectic leaf $\mu^{-1}(\sO_1)/GL_k(\cx)$, where $\mu(X,Y)=[Y,XY^{-1}]$ is the moment map for the symplectic form (i) in the above Lemma, and  $\sO_1$ consists of traceless matrices $A$ such that $1+A$ has rank $1$, should be viewed as a {\em trigonometric Calogero-Moser space}, analogous to the rational Calogero-Moser space  in \cite{Wilson}. Indeed, a simple computation shows that, if  $Y$ is diagonalisable, then $\tr X^2$ is the trigonometric Calogero-Moser Hamiltonian.
\end{remark}

\medskip

We now want to describe the  symplectic leaves for $\Pi_\epsilon$, in particular, its open dense symplectic leaf. In principle, one could proceed as in the proof of the last lemma, but the following approach is easier.
\begin{lemma} Let $\gamma= yxyy^\ast$ be a noncommutative vector field on  $\cx\langle x,y\rangle$. We have:
 $$[\gamma,\Pi_0]_{_{\sV}}=\Pi_\infty,\quad [\gamma,\Pi_\infty]_{_{\sV}}=0.$$
\label{deform}\end{lemma}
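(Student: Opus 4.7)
The plan is to verify both identities by direct computation from the defining formula \eqref{bracket}. Since $\gamma\in \sV^1 Q$ and $\Pi_0,\Pi_\infty\in \sV^2 Q$, the sign $(-1)^{(r-1)(s-1)}$ with $r=1$, $s=2$ equals $+1$, so
\begin{equation*}
[\gamma,\Pi]_{_{\sV}} \;=\; \sum_{a\in\{x,y\}}\Bigl(D_{a^\ast}(\gamma)\,D_a(\Pi)\;-\;D_{a^\ast}(\Pi)\,D_a(\gamma)\Bigr).
\end{equation*}
My first step is to tabulate the four directional superderivatives of $\gamma=yxyy^\ast$ via \eqref{D}. Because the single dual arrow $y^\ast$ sits at the rightmost position, for every occurrence of an ordinary arrow one has $\mu_i=0$, and for the unique occurrence of $y^\ast$ one has $\lambda_i=0$; hence every sign $(-1)^{\lambda_i\mu_i}$ is $+1$, and one reads off
\begin{equation*}
D_x(\gamma)=yy^\ast y,\quad D_y(\gamma)=xyy^\ast+y^\ast yx,\quad D_{x^\ast}(\gamma)=0,\quad D_{y^\ast}(\gamma)=yxy.
\end{equation*}
In particular $\gamma$ is the $B$-derivation of $\cx\langle x,y\rangle$ sending $x\mapsto 0$ and $y\mapsto yxy$.

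Next I would compute $D_w(\Pi_0)$ and $D_w(\Pi_\infty)$ for $w\in\{x,y,x^\ast,y^\ast\}$ in exactly the same way; each of $\Pi_0=[xx^\ast,x^\ast]+[yx^\ast,y^\ast]$ and $\Pi_\infty=[yy^\ast,yxx^\ast+y^2y^\ast]$ is a short sum of monomials, so only a handful of terms are produced, with the signs now coming from positions to the right of a dual arrow (e.g.\ the pair of $x^\ast$'s in $xx^\ast x^\ast$ contributes a useful $-1$). After substituting into the Schouten formula, $[\gamma,\Pi_0]_{_{\sV}}$ is a sum of length-$5$ monomials in $\sV^2 Q$ and $[\gamma,\Pi_\infty]_{_{\sV}}$ is a sum of length-$7$ monomials. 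To match these against $\Pi_\infty$ (resp.\ $0$) I would push every term into a canonical wedge form $(Pa^\ast)\wedge(Qb^\ast)$ by applying \eqref{pq} — the crucial simplification being that for any $R\in\cx Q$ one has $(Ra^\ast)\cdot y=y\cdot(Ra^\ast)$ in $\sV^1 Q$, which lets $\sV^0$-factors migrate through a single dual arrow and produces identities such as $yy^\ast y=y^2 y^\ast$ — and then compare.

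The main obstacle is sheer bookkeeping, particularly in the second identity, where the length-$7$ expansion splits into a dozen or so monomials whose cancellation has to be traced carefully through \eqref{pq}. For readers (or author) wishing to bypass this, the cleanest shortcut is to invoke Theorem~\ref{commute} together with the injectivity argument from the proof of Theorem~\ref{converse}: both identities can be verified after applying $\Psi$ on $M=\Rep(Q,n)\simeq\gl_n(\cx)\oplus\gl_n(\cx)$ for $n$ large. There $\Psi(\gamma)$ is the classical polynomial vector field $V\colon(X,Y)\mapsto(0,YXY)$, while Lemma~\ref{four} provides explicit symplectic duals $\omega_0=\tr dY\wedge d(XY^{-1})$ and $\omega_\infty=\tr d(X^{-1}Y^{-1})\wedge dX$ for $\pi_0=\Psi(\Pi_0)$ and $\pi_\infty=\Psi(\Pi_\infty)$ on the dense open where $X$ and $Y$ are invertible. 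The vanishing $\sL_V\pi_\infty=0$ reduces, by Cartan's formula and $d\omega_\infty=0$, to showing that $\iota_V\omega_\infty$ is closed, which is straightforward; and a single differential calculation with $\omega_0$ identifies $\sL_V\pi_0$ with $\pi_\infty$. Extending by density from the invertible locus and invoking injectivity of $\Psi$ on each graded piece completes the proof.
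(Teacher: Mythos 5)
Your primary route is exactly the paper's: the paper's proof is simply ``Direct computation,'' and your setup of that computation is correct — the sign $(-1)^{(r-1)(s-1)}=+1$, the values $D_x(\gamma)=yy^\ast y$, $D_y(\gamma)=xyy^\ast+y^\ast yx$, $D_{x^\ast}(\gamma)=0$, $D_{y^\ast}(\gamma)=yxy$, and the reduction of terms via \eqref{pq} all check out, and carrying it through does yield $[\gamma,\Pi_0]_{_{\sV}}=\Pi_\infty$ and $[\gamma,\Pi_\infty]_{_{\sV}}=0$. Your optional shortcut through $\Psi$, Lemma~\ref{four} and the injectivity argument of Theorem~\ref{converse} is also viable, but it is not what the paper does.
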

\begin{proof}
 Direct computation.
\end{proof}

From this, we obtain immediately:
\begin{corollary}
 Let $\Gamma=\Psi(\gamma)$ be the vector field induced by $\gamma$ on $V=\Mat_{k\times k}(\cx) \oplus \Mat_{k\times k}(\cx)$, and $\phi_t$ the corresponding family of local diffeomorphisms of $V$. Then $\Psi(\Pi_\epsilon)=(\phi_\epsilon)_\ast(\Psi(\Pi_0))$.\hfill $\Box$
\end{corollary}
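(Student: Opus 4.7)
The plan is to bootstrap the corollary from Lemma~\ref{deform} using two standard facts: the compatibility of $\Psi$ with Schouten brackets (Theorem~\ref{commute}), and the identification of the Schouten bracket with a vector field as a Lie derivative.

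\textbf{Step 1: transfer to the commutative side.} Apply $\Psi$ to both identities of Lemma~\ref{deform}. Because $\Psi$ commutes with Schouten brackets by Theorem~\ref{commute}, and $\Psi(\gamma)=\Gamma$, we get
\begin{equation*}
[\Gamma,\Psi(\Pi_0)]=\Psi(\Pi_\infty),\qquad [\Gamma,\Psi(\Pi_\infty)]=0,
\end{equation*}
where $[\,,\,]$ is now the usual Schouten--Nijenhuis bracket on polyvector fields on $V$. For a vector field $\Gamma$ and any polyvector field $P$ one has $[\Gamma,P]=\mathcal{L}_\Gamma P$, so the two identities read $\mathcal{L}_\Gamma\Psi(\Pi_0)=\Psi(\Pi_\infty)$ and $\mathcal{L}_\Gamma\Psi(\Pi_\infty)=0$.

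\textbf{Step 2: show the linear deformation is rigid under the flow.} The second identity says that $\Psi(\Pi_\infty)$ is invariant under $\phi_t$, i.e.\ $(\phi_t)_*\Psi(\Pi_\infty)=\Psi(\Pi_\infty)$ for every $t$ for which the flow is defined. Let $P(t)=(\phi_t)_*\Psi(\Pi_0)$. Since $\mathcal{L}_\Gamma$ commutes with $(\phi_t)_*$ (as $\Gamma$ is invariant under its own flow), we can differentiate:
\begin{equation*}
\frac{d}{dt}P(t)=(\phi_t)_*\mathcal{L}_\Gamma\Psi(\Pi_0)=(\phi_t)_*\Psi(\Pi_\infty)=\Psi(\Pi_\infty).
\end{equation*}
Thus $P'(t)$ is constant, and integrating from $0$ to $\epsilon$ gives $P(\epsilon)=\Psi(\Pi_0)+\epsilon\,\Psi(\Pi_\infty)=\Psi(\Pi_\epsilon)$, which is the claim. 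Equivalently, one may note that the formal exponential $\exp(\epsilon\,\mathcal{L}_\Gamma)\Psi(\Pi_0)$ terminates after the linear term, because $\mathcal{L}_\Gamma^2\Psi(\Pi_0)=\mathcal{L}_\Gamma\Psi(\Pi_\infty)=0$.

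\textbf{Main obstacle.} The mathematical content is immediate once Lemma~\ref{deform} is in hand; the only delicate point is matching sign conventions between pushforward of polyvector fields and Lie derivatives, so that $\frac{d}{dt}(\phi_t)_*$ at $t=0$ produces $\mathcal{L}_\Gamma$ with the correct sign to match the linear dependence $\Psi(\Pi_\epsilon)=\Psi(\Pi_0)+\epsilon\Psi(\Pi_\infty)$. This is purely bookkeeping; once fixed, the truncated Taylor series makes the conclusion automatic.
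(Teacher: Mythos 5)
Your argument is correct and is essentially the paper's own (implicit) proof: the corollary is stated there as an immediate consequence of Lemma \ref{deform}, i.e.\ one pushes the two bracket identities through $\Psi$ using Theorem \ref{commute}, reads them as $\mathcal{L}_\Gamma\Psi(\Pi_0)=\Psi(\Pi_\infty)$ and $\mathcal{L}_\Gamma\Psi(\Pi_\infty)=0$, and integrates the flow, the Taylor expansion truncating after the linear term. The sign bookkeeping you flag is the only point of care (it amounts to fixing whether $\phi_t$ denotes the flow of $\Gamma$ or of $-\Gamma$, equivalently pushforward versus pullback), and the paper glosses over it in exactly the same way.
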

A simple computation shows that
\begin{equation}
 \phi_t(X,Y)=\left(X, (Y^{-1}-tX)^{-1}\right).\label{dphi}
\end{equation}
In particular, we obtain, from Lemma \ref{four}:
\begin{corollary}
 The Poisson bracket $\Psi(\Pi_0+\epsilon\Pi_\infty)$ on $\Mat_{k\times k}(\cx) \oplus \Mat_{k\times k}(\cx)$ is generically nondegenerate. The corresponding symplectic form is:
\begin{equation}
 \tr  d(Y^{-1}-\epsilon X)^{-1}\wedge d\bigl(X(Y^{-1}-\epsilon X)\bigr). \qquad\qquad\Box\label{BB}
\end{equation}
\end{corollary}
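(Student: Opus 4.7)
The plan is to combine the previous corollary with Lemma~\ref{four}(i). The corollary identifies $\Psi(\Pi_\epsilon)$ as the transport of $\Psi(\Pi_0)$ by the explicit family of diffeomorphisms $\phi_\epsilon$ given in \eqref{dphi}, while the lemma furnishes $\Psi(\Pi_0)$ together with the explicit symplectic form $\omega_0 = \tr dY\wedge d(XY^{-1})$ on the Zariski-open subset where $Y$ is invertible.

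The first assertion (generic nondegeneracy of $\Psi(\Pi_\epsilon)$) is then immediate: since $\phi_\epsilon$ restricts to an algebraic isomorphism between Zariski-open subsets of $V=\Mat_{k\times k}(\cx)\oplus\Mat_{k\times k}(\cx)$ (where the relevant matrices are invertible), it carries the open dense symplectic leaf of $\Psi(\Pi_0)$ onto an open dense symplectic leaf of $\Psi(\Pi_\epsilon)$, intertwining the two symplectic forms.

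To identify the symplectic form explicitly, I would apply the same transport to $\omega_0$. Concretely, $\phi_\epsilon$ sends $Y^{-1}$ to $Y^{-1}-\epsilon X$, hence $Y$ to $(Y^{-1}-\epsilon X)^{-1}$ and $XY^{-1}$ to $X(Y^{-1}-\epsilon X)$; transporting $\omega_0$ accordingly replaces $dY$ by $d(Y^{-1}-\epsilon X)^{-1}$ and $d(XY^{-1})$ by $d(X(Y^{-1}-\epsilon X))$, yielding precisely \eqref{BB}. No genuine obstacle arises: both ingredients are already in hand, and the computation is a routine matrix-calculus substitution, exactly parallel to the manipulations performed in the proof of Lemma~\ref{four}.
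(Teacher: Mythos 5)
Your proposal is correct and follows exactly the paper's (implicit) argument: generic nondegeneracy and the symplectic form are obtained by transporting $\Psi(\Pi_0)$ and the form $\tr dY\wedge d(XY^{-1})$ of Lemma~\ref{four}(i) along the explicit map \eqref{dphi}, and the substitution $Y^{-1}\mapsto Y^{-1}-\epsilon X$ gives \eqref{BB}. The only point to watch is the pushforward-versus-pullback bookkeeping: your substitution computes the pullback $\phi_\epsilon^{\ast}\omega_0$ rather than the pushforward $(\phi_\epsilon)_{\ast}\omega_0$, which is precisely what reproduces the sign of $\epsilon$ in \eqref{BB}, so any discrepancy with the preceding corollary's stated $(\phi_\epsilon)_{\ast}$ amounts only to the harmless relabelling $\epsilon\mapsto-\epsilon$ within the family $\Pi_\epsilon$.
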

We recall, from the beginning of the section, that $\Psi(\Pi_0+\Pi_\infty)$ corresponds to Poisson bracket on the open subset of the moduli space of acyclic $1$-dimensional sheaves on $\oP^2$ with Hilbert polynomial $P(u)=ku$. It follows, that we can study this Poisson structure via the   Poisson structure induced by $\Pi_0$ on $\Mat_{k\times k}(\cx) \oplus \Mat_{k\times k}(\cx)$ and the diffeomorphism \eqref{dphi} with $t=1$. In particular, generic symplectic leaves are of the form
$$ \left\{(X,Y);\: \bigl[(Y^{-1}-X)^{-1}, X(Y^{-1}-X)\bigr]\in O\right\}/GL_k(\cx),$$
for some adjoint orbit $O$ of $GL_k(\cx)$ (with the symplectic form  \eqref{BB} with $\epsilon=1$).
\begin{remark}
 Moduli spaces of semistable sheaves on $\oP^2$ carry other natural Poisson structures, called Mukai-Tuyrin-Bottacin Poisson structures, induced by Poisson brackets on $\oP^2$, i.e. by  sections of $K^\ast_{\oP^2}\simeq \sO(3)$ \cite{Mu, Tyu,Bo,HM2}. It is not clear to us whether the Poisson structure described above, and arising from a noncommutative quadratic Poisson bracket, is a Mukai-Tuyrin-Bottacin structure.
\end{remark}

\end{document}